\newcommand{\thr}{\theta^+}
\newcommand{\thl}{\theta^-}
\newcommand*{\boxedcolor}{red}
\renewcommand{\boxed}[1]{\textcolor{\boxedcolor}{%
  \fbox{\normalcolor\m@th$\displaystyle#1$}}}
\renewcommand{\leq}{\leqslant}
\renewcommand{\geq}{\geqslant}
\newcommand{\Ro}{\mathbb{R}}
\newcommand{\ud}{\text{d}}
\newcommand{\dd}[2]{\frac{\ud #1}{\ud #2}}
\newcommand{\avg}[1]{\overline{#1}}
\newcommand{\hf}{{\unitfrac{1}{2}}}
\newcommand{\imo}{{i-1}}
\newcommand{\ipo}{{i+1}}
\newcommand{\jph}{{j+\hf}}
\newcommand{\iph}{{i+\hf}}
\newcommand{\imh}{{i-\hf}}
\newcommand{\wt}{\widetilde{w}}
\begin{document}
\title{A sign preserving WENO reconstruction method\thanks{DR was supported by AIRBUS Group Corporate Foundation Chair in Mathematics of Complex Systems, established in TIFR/ICTS, Bangalore. }}
\author{Ulrik S. Fjordholm \and Deep Ray}
\institute{U. S. Fjordholm \at
              Department of Mathematical Sciences, NTNU, Trondheim 7491, Norway \\
              \email{ulrik.fjordholm@math.ntnu.no}
           \and
           D. Ray \at
              TIFR - Centre for Applicable Mathematics, Bangalore
              \email{deep.day@gmail.com}
}

\maketitle

\begin{abstract}
We propose a third-order WENO reconstruction which satisfies the \emph{sign property}, required for constructing high resolution entropy stable finite difference scheme for conservation laws. The reconstruction technique, which is termed as SP-WENO, is endowed with additional properties making it a more robust option compared to ENO schemes of the same order. The performance of the proposed reconstruction is demonstrated via a series of numerical experiments for linear and nonlinear scalar conservation laws. The scheme is easily extended to multi-dimensional conservation laws.
\end{abstract}

%%%%% AMS/PACs/Keywords %%%%%%%%%%%
%\pac{}
%\ams{52B10, 65D18, 68U05, 68U07}
%\keywords{Euler equation, Navier-Stokes equation, finite volume method, Kinetic energy preservation, entropy conservation.}

\maketitle

%-----------------------------------------------------------------------------------
\section{Introduction}\label{sec:intro}
High-resolution numerical methods have been of key importance in the past few decades. Hyperbolic system of conservation laws are among the class of problems that require such efficient and accurate schemes methods. Conservative finite difference (finite volume) methods, in which the computational domain is divided into control volumes and a discrete version of the conservation law imposed on each control volume, are very popular. In these methods, point values (cell-averages) in each control volume are evolved in time using suitable time integration techniques, such as strong stability preserving Runge-Kutta methods \cite{GOT01}. Higher-order methods are obtained by suitably reconstructing the solution in each control volume. Reconstruction using total variation diminishing (TVD) limiters have good non-linear stability properties, but can lead to clipping of smooth extrema \cite{OSHER84b,OSHER88}. Essentially non-oscillatory (ENO) reconstruction methods were first introduced by Harten et al.\ \cite{HARTEN87}. The main idea is to choose the {\em smoothest} stencil among a number of candidate stencils, to reconstruct the solution at the cell-interfaces to be used in the numerical flux. ENO schemes have been quite successful in practice, but can show deterioration in accuracy due to selection of unstable stencils \cite{ROGER90}.

Weighted ENO (WENO) schemes \cite{LIU94,JS96} were proposed as an improvement over ENO schemes. The basic idea of WENO is to take a convex combination of all $k$ polynomials involved in the $k$-th order ENO (ENO-k) approximation at an interface, and obtain a $(2k-1)$-th order approximation of the interface value. The weights are chosen so as to give the least weight to stencils containing discontinuities. It has been shown in \cite{SHU98} that WENO schemes do not suffer from accuracy deterioration faced by ENO schemes. 

Scalar conservation laws have been studied and analysed in great detail. It is known that the solutions can develop discontinuities, despite having smooth initial data \cite{DAFER10}. Thus, the solutions must be interpreted in a weak (distributional) sense. Additional conditions known as {\em entropy conditions} need to be imposed to single out a physically relevant solutions. Scalar conservation laws have been shown to have unique entropy weak solutions \cite{KRUZ70}. Hence, it becomes important to construct numerical methods whose solutions converge to the entropy solution.

For scalar conservation laws in one dimension, monotone schemes have been shown to be total variation diminishing (TVD) and satisfy the entropy condition \cite{CRANDALL80}. {\em E-schemes} have been designed in \cite{OSHER84a} to preserve a discrete version of the entropy condition. However, E-schemes --- and in particular monotone schemes --- are at most first-order accurate. Second-order schemes using flux-limiters were introduced in \cite{SWEBY84}. Tadmor \cite{TADMOR03} proposed a new method of constructing entropy stable schemes for conservation laws, which consists of two components: i) constructing a second-order {\em entropy conservative} scheme that preserves entropy locally, ii) adding artificial dissipation to get entropy stability. Higher-order entropy conservative finite difference schemes have been constructed in \cite{LMR02}. The design of arbitrary-high order entropy stable schemes was proposed recently by Fjordholm et al.\ \cite{FMT12}. These so-called TeCNO schemes combine high-order entropy conservative fluxes with high-order numerical diffusion operators, based on piecewise polynomial reconstruction. The reconstructions have to satisfy a {\em sign property} at each cell interface to ensure entropy stability. This means that the jump in the reconstructed values at every cell interface must have the same sign as the jump in the corresponding cell values. It was shown in \cite{FMT13} that the standard ENO reconstruction procedure satisfies the sign property. 

To the best of our knowledge, the second order limited TVD reconstruction using the minmod limiter and the ENO method are the only reconstruction methods that satisfy the crucial sign property \cite{FMT12}. Existing WENO schemes do not satisfy the sign-property. The aim of the current paper is to construct a third-order WENO reconstruction method which satisfies the sign-property. This leads to a third-order accurate entropy stable scheme when used in conjunction with the TeCNO schemes. 

The rest of the paper is organized as follows. In Section \ref{sec:ent_frame} we briefly introduce the entropy framework for scalar conservation laws. Section \ref{sec:mesh_fdm} highlights the existing work on high-order entropy conservative and entropy stable schemes. The properties of a third-order sign preserving WENO scheme are discussed in Section \ref{sec:weno}, with explicit WENO weights constructed in Section \ref{sec:sp_weno}. We show that the proposed reconstruction method is stable in the sense that the reconstructed jumps can be at most twice as large as the original values. Several numerical results are presented for one-dimensional linear advection and the inviscid Burgers equation in Section \ref{sec:numerical_results}. We end with concluding remarks in Section \ref{sec:conclusion}.

\section{Entropy framework}\label{sec:ent_frame}
Consider the following Cauchy problem for a scalar conservation law
\begin{equation}\label{eqn:conlaw}
\begin{aligned}
 \partial_t u + \partial_x f(u) = 0& \qquad \forall \ (x,t)\in \Ro \times \Ro^+ \\
 u(x,0) = u_0(x) & \qquad \forall\ x \in \Ro.
 \end{aligned}
\end{equation}
In the above problem, $u$ is the conserved variable with a smooth flux $f(u)$. Assume that \eqref{eqn:conlaw} is equipped with a convex {\em entropy function} $\eta(u)$ and an {\em entropy flux} $q(u)$ such that $
 q^\prime(u) = \eta^\prime (u) f^\prime(u)$ holds.
Multiplying \eqref{eqn:conlaw} with the {\em entropy variable} $v(u) = \eta^\prime(u)$, results in an additional conservation law for smooth solutions:
\begin{equation}\label{eqn:entropyeq}
 \partial_t \eta (u) + \partial_x q(u) = 0 .
\end{equation}
However, for discontinuous solutions, entropy should be dissipated at shocks, and hence one imposes the entropy condition
\begin{equation}\label{eqn:entropyineq}
 \partial_t \eta (u) + \partial_x q(u) \leq 0
\end{equation}
which is understood in the sense of distributions. A weak solution of \eqref{eqn:conlaw} is called an {\em entropy solution} if \eqref{eqn:entropyineq} holds. Formally integrating \eqref{eqn:entropyineq} in space and ignoring the boundary terms by assuming periodic or no-inflow boundary conditions, we get
\begin{equation}\label{eqn:globalentropyineq}
\dd{}{t} \int \limits_{\Ro} \eta (u) \ud x  \leq 0 \quad \implies \quad  \int \limits_{\Ro} \eta (u(x,t)) \ud x  \leq  \int \limits_{\Ro} \eta (u_0(x)) \ud x \qquad \forall \ t > 0 .
\end{equation}
As $\eta$ is convex, the above entropy bound gives rise to an a priori estimate on the solution of \eqref{eqn:conlaw} in suitable $L^p$ spaces \cite{DAFER10}. 

For scalar conservation laws, any convex function $\eta(u)$ can serve as an entropy function, with the corresponding entropy flux chosen as 
\[
q(u) = \int^u \eta'(z) f'(z) \ud z.
\]
This idea was been exploited to prove stability and uniqueness of entropy solutions of scalar conservation laws \cite{KRUZ70}.

%-----------------------------------------
\section{Mesh and finite difference scheme}\label{sec:mesh_fdm}
We discretise the domain using disjoint intervals $I_i = [x_\imh,x_\iph)$ of uniform length $x_\iph - x_\imh \equiv h$. We use the notation $x_i$ to denote the center of the interval $I_i$. A generic semi-discrete finite difference scheme for \eqref{eqn:conlaw} is given by
\begin{equation}\label{eqn:fdm}
\dd{ u_i}{t} + \frac{1}{h} \left( F_\iph - F_\imh \right) = 0.
\end{equation}
Here, $u_i(t) = u(x_i,t)$ is the point values of the solution at the cell centre $x_i$, while $F_\iph(t) = F(u_i(t), u_\ipo(t))$ is a consistent and conservative approximation of the flux at the cell interface $x_\iph$.

%----------------------------------------
\subsection{Entropy conservative schemes}
As mentioned in the introduction, we are interested in looking at entropy stable schemes to approximate \eqref{eqn:conlaw}.  Following the approach described by Tadmor \cite{TADMOR03}, we first consider an entropy conservative scheme.

\begin{definition}
The numerical scheme \eqref{eqn:fdm} is said to be {\em entropy conservative} if it satisfies the discrete entropy relation
\begin{equation}\label{eqn:ECrel}
 \normalfont
 \dd{\eta(u_i)}{t} + \frac{1}{h} \bigl(q^*_\iph - q^*_\imh\bigr) = 0
\end{equation}
where $q^*_\iph$ is a consistent numerical entropy flux.
\end{definition}
We introduce the following notation:
\[
 \Delta (\cdotp)_{\iph} = (\cdotp)_\ipo - (\cdotp)_i, \qquad \avg{(\cdotp)}_{\iph} = \frac{(\cdotp)_\ipo +(\cdotp)_i}{2}.
\]
Moreover, we introduce the \emph{entropy potential}
\[
\Psi(u) := v(u) f(u) - q(u).
\]
The following theorem gives a sufficient condition for constructing entropy conservative fluxes.
\begin{theorem}[Tadmor \cite{TADMOR84}]\label{thm:ECflux}
 The numerical scheme \eqref{eqn:fdm} with the flux $F^{*}_\iph= F^*(u_i,u_\ipo)$ is entropy conservative if
 \begin{equation}\label{eqn:ECfluxcond}
  \Delta v_\iph F^*_\iph= \Delta \Psi_\iph.
 \end{equation}
Specifically, it satisfies \eqref{eqn:ECrel} with the consistent numerical entropy flux given by
 \begin{equation*}
  q^*_\iph = \overline{v}_\iph F^*_\iph - \avg{\Psi}_\iph.
 \end{equation*}
 Furthermore, the scheme is second order accurate in space.
\end{theorem}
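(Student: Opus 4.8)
The plan is to pair the scheme with the entropy variable and then reorganise the resulting flux terms into a telescoping difference. Setting $v_i := v(u_i) = \eta'(u_i)$ and multiplying \eqref{eqn:fdm} by $v_i$, the chain rule $v_i\,\dd{u_i}{t} = \dd{\eta(u_i)}{t}$ yields
\begin{equation*}
\dd{\eta(u_i)}{t} + \frac{1}{h}\bigl(v_i F^*_\iph - v_i F^*_\imh\bigr) = 0 ,
\end{equation*}
so the task reduces to exhibiting a consistent numerical entropy flux $q^*_\iph$ for which $q^*_\iph - q^*_\imh = v_i F^*_\iph - v_i F^*_\imh$, and I would show that $q^*_\iph = \avg{v}_\iph F^*_\iph - \avg{\Psi}_\iph$ does the job.

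The crux is verifying this telescoping identity. Using $\avg{v}_\iph - v_i = \tfrac12\Delta v_\iph$ and $v_i - \avg{v}_\imh = \tfrac12\Delta v_\imh$, I would expand $q^*_\iph - q^*_\imh$ and subtract $v_i(F^*_\iph - F^*_\imh)$, which leaves $\tfrac12\Delta v_\iph F^*_\iph + \tfrac12\Delta v_\imh F^*_\imh - \avg{\Psi}_\iph + \avg{\Psi}_\imh$. The hypothesis \eqref{eqn:ECfluxcond} then lets me replace $\Delta v_\iph F^*_\iph$ by $\Delta\Psi_\iph$ and $\Delta v_\imh F^*_\imh$ by $\Delta\Psi_\imh$, after which a one-line computation shows the whole expression vanishes (each nodal value $\Psi_\imo$, $\Psi_i$, $\Psi_\ipo$ occurring with coefficient zero). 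This gives \eqref{eqn:ECrel}. I would then check consistency of $q^*$ by letting $u_\ipo = u_i = u$: there $\avg{v}_\iph = v(u)$ and $\avg{\Psi}_\iph = \Psi(u) = v(u)f(u) - q(u)$, while $F^*_\iph = \Delta\Psi_\iph/\Delta v_\iph \to \Psi'(u)/v'(u) = f(u)$ because $\Psi'(u) = v'(u)f(u) + v(u)f'(u) - q'(u) = v'(u)f(u)$ using $q'(u) = \eta'(u)f'(u) = v(u)f'(u)$; hence $q^*_\iph \to q(u)$.

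For the second-order claim, I would first note that strict convexity of $\eta$ makes $v = \eta'$ strictly monotone, so $\Delta v_\iph \neq 0$ whenever $u_i \neq u_\ipo$; thus \eqref{eqn:ECfluxcond} determines $F^*_\iph = \Delta\Psi_\iph/\Delta v_\iph$ uniquely, and being a ratio of two quantities each changing sign under interchange of the two states, $F^*$ is a \emph{symmetric} two-point flux. A consistent symmetric two-point flux is automatically second order: with $\phi(s) := F^*\bigl(u(x_\iph - s/2),\,u(x_\iph + s/2)\bigr)$, symmetry gives $\phi(-s) = \phi(s)$, so $\phi$ is even and (by consistency) $\phi(h) = f(u(x_\iph)) + \bigO(h^2)$; hence $F^*_\iph$ is a second-order accurate approximation of $f(u)$ at $x_\iph$, and the centred difference $\tfrac1h(F^*_\iph - F^*_\imh)$ approximates $\partial_x f(u)$ at $x_i$ to $\bigO(h^2)$.

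I expect the telescoping identity of the second paragraph to be the main obstacle — the bookkeeping of the averaging and difference operators at the two interfaces has to be organised carefully so that the cancellation is transparent; the consistency check and the symmetry-implies-second-order argument are then routine.
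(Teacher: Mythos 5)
Your proof is correct: the telescoping identity $q^*_\iph - q^*_\imh = v_i\bigl(F^*_\iph - F^*_\imh\bigr)$ does reduce, after invoking \eqref{eqn:ECfluxcond}, to a sum in which each nodal value of $\Psi$ cancels, and the consistency check and the symmetry-implies-second-order argument are sound (with the routine caveat that smoothness of $u$ and $F^*$ is needed so that the $\bigO(h^2)$ error coefficient is itself smooth and survives the divided difference). The paper supplies no proof of this statement --- it is quoted from Tadmor \cite{TADMOR84} --- and your argument is essentially the standard one given in that reference, so there is nothing further to reconcile.
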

For scalar conservation laws and a given entropy function $\eta(u)$, two-point entropy conservative fluxes are uniquely determined by
\[
F^*_\iph = \frac{\Delta \Psi_\iph}{ \Delta v_\iph}.
\]
\begin{example}
Consider the linear advection equation with flux $f(u) = cu$, where c is a constant. Choosing the square entropy $\eta(u) = u^2/2$ and the corresponding entropy flux as $q(u) = c u^2/2$, we get the entropy conservative flux
\begin{equation}\label{eqn:ecflux_linadv}
F^*_\iph = c\frac{\left(u_i + u_\ipo\right)}{2}.
\end{equation}
\end{example}

\begin{example}
Consider the Burgers equation with flux $f(u) = u^2/2$. The square entropy $\eta(u) = u^2/2$ and the corresponding entropy flux as $q(u) =  u^3/3$ results in  the entropy conservative flux
\begin{equation}\label{eqn:ecflux_burgers}
F^*_\iph =  \frac{\left(u_i^2 + u_\ipo^2 + u_i u_\ipo\right)}{6}.
\end{equation}
\end{example}

The entropy conservative schemes described above are only second order accurate \cite{TADMOR03}. However, the approach of LeFloch, Mercier and Rhode \cite{LMR02} can be used to used to construct higher order entropy conservative fluxes. The basic idea of their approach is to take appropriate linear combinations of existing second-order accurate entropy conservative fluxes 
\begin{equation}\label{eqn:EC_high}
F^{*,2p}_\iph = \sum \limits_{r=1}^{p} \alpha_r^p \sum \limits_{s=0}^{r-1} F^*(u_{i-s},u_{i-s+r})
\end{equation}
to obtain $2p$th-order accurate entropy conservative fluxes.
%\begin{theorem}[LeFloch et al.\ \cite{LMR02}]
%For $p \in \mathbb{N}$, assume that $\alpha_1^p, ..., \alpha_p^p$ solve the p linear equations
%\[
% 2 \sum \limits_{r=1}^p r \alpha_r^p = 1, \qquad \sum \limits_{i=1}^p i^{2s-1} \alpha_r^p = 0 \quad (s=2,...,p),
% \]
% and define $F^{*,2p}$ by \eqref{eqn:EC_high}. Then the finite difference scheme with the flux $F^{*,2p}$ is
%\begin{enumerate}
%\item $2p$-th order accurate, in the sense that for sufficiently smooth solutions $u$ we have
%\[
%\frac{1}{h} \left(F^{*,2p}(u_{i-p+1},...,u_{i+p} )- F^{*,2p}(u_{i-p},...,u_{i+p-1} ) \right) = \partial_x f(u)\Bigr|_{x=x_i} + \mathcal{O}(h^{2p});
%\]
%\item entropy conservative, i.e.\ it satisfies the discrete entropy identity
%\[
%\dd{}{t}\eta(u_i(t)) + \frac{1}{h} \left( q^{*,2p}_\iph - q^{*,2p}_\imh \right) = 0,
%\]
%where
%\[
%q^{*,2p}_\iph = \sum \limits_{r=1}^{p} \alpha_r^p \sum \limits_{s=0}^{r-1} q^*(u_{i-s},u_{i-s+r})
%\]
%is a consistent numerical entropy flux.
%\end{enumerate}
%\end{theorem}
For instance, the fourth-order $(p=2)$ version of the entropy conservative flux has the expression
\begin{equation}\label{eqn:EC4}
F^{*,4} = \frac{4}{3} F^*(u_i, u_\ipo) - \frac{1}{6} \bigl(F^*(u_{\imo}, u_{\ipo}) + F^*(u_i, u_{i+2})\bigr).
\end{equation}
\begin{remark}
Since any convex choice for the entropy function $\eta(u)$ works for scalar conservation laws, we shall adhere to the choice $\eta(u) = u^2/2$ for the remainder of this paper. The case of an arbitrary convex entropy is in no way more difficult, so this choice is merely for the sake of simplicity of notation. Our choice of entropy implies that the entropy variable $v:=\eta'(u)$ is identical to the conserved variable $u$. The variables $u$ and $v$ will be used interchangeably.
\end{remark}

%----------------------------------------
\subsection{Entropy stable schemes}\label{ent_stab_schemes}
While entropy conservation is the correct notion when dealing with smooth solutions, entropy is dissipated near discontinuities in accordance to \eqref{eqn:entropyineq}. Therefore, we add entropy variable--based numerical dissipation to the numerical flux in the form
\begin{equation}\label{eqn:ESflux}
 F_\iph = F^{*,2p}_\iph - \frac{1}{2} a_\iph \Delta v_\iph
\end{equation}
where $F^{*,2p}_\iph$ is the $2p$-th order accurate entropy conservative flux described above, and $a_\iph$ is a non-negative quantity. Generally, $a_\iph$ is taken to be some appropriate approximation of $|f^\prime(u)|$. This leads us to the next lemma concerning entropy stable schemes.
\begin{lemma}[Tadmor \cite{TADMOR84}]\label{lemma:ESF}
The semi-discrete numerical scheme \eqref{eqn:fdm} with numerical flux given by \eqref{eqn:ESflux} satisfies the discrete entropy inequality
\begin{equation}
\normalfont
 \dd{\eta(u_i)}{t} + \frac{1}{h} \left(q_\iph - q_\imh\right) \leq 0
\end{equation}
where the numerical entropy flux is given by
\begin{equation*}
 q_\iph = q^*_\iph - \frac{1}{2}  \avg{v}_\iph a_\iph \Delta v_\iph.
\end{equation*}
\end{lemma}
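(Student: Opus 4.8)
The plan is to verify the discrete entropy inequality directly by computing the rate of change of entropy in cell $i$ along the lines of Tadmor's standard argument, treating the entropy conservative part and the dissipation part separately. Since $\eta(u)=u^2/2$ and $v=u$, we have $\dd{\eta(u_i)}{t} = v_i \dd{u_i}{t}$, and substituting the scheme \eqref{eqn:fdm} with flux \eqref{eqn:ESflux} gives
\begin{equation*}
\dd{\eta(u_i)}{t} = -\frac{v_i}{h}\left(F_\iph - F_\imh\right) = -\frac{v_i}{h}\left(F^{*,2p}_\iph - F^{*,2p}_\imh\right) + \frac{v_i}{2h}\left(a_\iph \Delta v_\iph - a_\imh \Delta v_\imh\right).
\end{equation*}
For the first (entropy conservative) group, I would invoke Theorem \ref{thm:ECflux}: since $F^{*,2p}_\iph$ is entropy conservative, the corresponding terms telescope into $-\frac{1}{h}(q^*_\iph - q^*_\imh)$ exactly, where $q^*_\iph = \avg{v}_\iph F^{*,2p}_\iph - \avg{\Psi}_\iph$ is the consistent numerical entropy flux. (One uses that $v_i \Delta v_\iph = \avg{v}_\iph \Delta v_\iph - \tfrac12 (\Delta v_\iph)^2$ together with $\Delta v_\iph F^{*,2p}_\iph = \Delta\Psi_\iph$, so the non-telescoping remainder vanishes — this is precisely the content of Tadmor's theorem applied to the high-order flux.)

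The main work is in the dissipation group. Writing $v_i = \avg{v}_\iph - \tfrac12\Delta v_\iph = \avg{v}_\imh + \tfrac12\Delta v_\imh$, I would split
\begin{equation*}
\frac{v_i}{2h}\left(a_\iph \Delta v_\iph - a_\imh \Delta v_\imh\right) = \frac{1}{2h}\left(\avg{v}_\iph a_\iph \Delta v_\iph - \avg{v}_\imh a_\imh \Delta v_\imh\right) - \frac{1}{4h}\left(a_\iph (\Delta v_\iph)^2 + a_\imh (\Delta v_\imh)^2\right).
\end{equation*}
The first bracket on the right telescopes and combines with $q^*_\iph$ to produce exactly $-\frac1h(q_\iph - q_\imh)$ with $q_\iph = q^*_\iph - \tfrac12 \avg{v}_\iph a_\iph \Delta v_\iph$ as claimed. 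The leftover term is $-\frac{1}{4h}(a_\iph (\Delta v_\iph)^2 + a_\imh (\Delta v_\imh)^2)$, which is nonpositive precisely because $a_\iph \ge 0$. Hence $\dd{\eta(u_i)}{t} + \frac1h(q_\iph - q_\imh) \le 0$, which is the assertion.

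The one step I expect to require care rather than being purely mechanical is the first: justifying that the high-order flux $F^{*,2p}_\iph$ from \eqref{eqn:EC_high} still satisfies the entropy conservation identity \eqref{eqn:ECfluxcond}, i.e.\ $\Delta v_\iph F^{*,2p}_\iph = \Delta\Psi_\iph$ with the appropriately defined potential telescoping. This is not literally a two-point flux of the form covered verbatim by Theorem \ref{thm:ECflux}, so one must appeal to the LeFloch–Mercier–Rohde construction \cite{LMR02}: each building block $F^*(u_{i-s},u_{i-s+r})$ is entropy conservative in the generalized (multi-point) sense, and the linear combination with coefficients $\alpha_r^p$ preserves this while raising the order. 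Once that is granted, the remainder of the proof is the routine algebra sketched above, and the only structural hypothesis actually used about the dissipation coefficient is its nonnegativity, $a_\iph \ge 0$.
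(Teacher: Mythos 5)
The paper offers no proof of this lemma; it is quoted from Tadmor \cite{TADMOR84}. Your argument is the standard one, and its essential content --- the treatment of the dissipation term --- is exactly right: the splitting $v_i = \avg{v}_\iph - \tfrac12\Delta v_\iph = \avg{v}_\imh + \tfrac12\Delta v_\imh$ turns $\frac{v_i}{2h}\bigl(a_\iph\Delta v_\iph - a_\imh\Delta v_\imh\bigr)$ into a telescoping part absorbed into $q_\iph$ plus the residual $-\frac{1}{4h}\bigl(a_\iph(\Delta v_\iph)^2 + a_\imh(\Delta v_\imh)^2\bigr)\le 0$, which is precisely where the hypothesis $a_\iph\ge 0$ enters.

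The one point you should not let stand as written is the parenthetical justification of the entropy-conservative part. The identity $\Delta v_\iph F^{*,2p}_\iph = \Delta\Psi_\iph$, and with it the formula $q^*_\iph = \avg{v}_\iph F^{*,2p}_\iph - \avg{\Psi}_\iph$, cannot hold for $p\ge 2$: as the paper itself notes, the two-point condition \eqref{eqn:ECfluxcond} determines the flux uniquely as $\Delta\Psi_\iph/\Delta v_\iph$, and the fourth-order flux \eqref{eqn:EC4} is not that flux. What is true (and what \cite{LMR02} proves) is that each building block $F^*(u_{i-s},u_{i-s+r})$ is a two-point entropy conservative flux between cells $i-s$ and $i-s+r$, and the combination \eqref{eqn:EC_high} satisfies a cell entropy \emph{equality} with the correspondingly combined numerical entropy flux $q^{*,2p}_\iph = \sum_r\alpha_r^p\sum_s \tilde q(u_{i-s},u_{i-s+r})$, where $\tilde q(u,w) = \frac{v(u)+v(w)}{2}F^*(u,w) - \frac{\Psi(u)+\Psi(w)}{2}$. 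Since your inequality only needs the conservative part to telescope exactly into \emph{some} consistent $q^*_\iph - q^*_\imh$, substituting this $q^{*,2p}$ for your formula closes the argument (and for $p=1$ your version is already complete). You correctly flagged this as the delicate step; the fix is to change the numerical entropy flux, not to extend the two-point identity to the high-order flux.
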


Note that the term $\Delta v_\iph$ appearing in \eqref{eqn:ESflux} is $\mathcal{O}(|\Delta x_\iph|)$. Thus, the scheme \eqref{eqn:ESflux} is only first-order accurate, irrespective of the order of the entropy conservative flux used. To obtain a higher order scheme, we need to appropriately reconstruct the entropy variable (or equivalently, the conserved variable) at the cell interfaces. Consider the cell interface at $x_\iph$ between control volumes $I_i$ and $I_\ipo$, as shown in Figure~\ref{fig:recon}.

\begin{figure}[!ht]
\begin{center}
\includegraphics[width=0.60\textwidth]{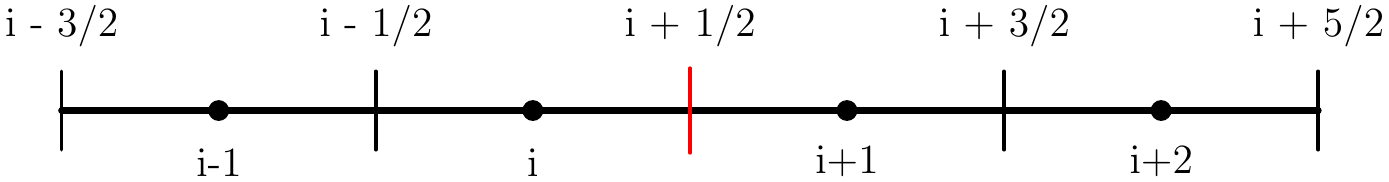}
\caption{Stencil for reconstruction.}
\label{fig:recon}
\end{center}
\end{figure}

Corresponding to this particular cell interface, let $v_i(x)$ and $v_\ipo(x)$ be polynomial reconstructions of the entropy variable in $I_i$ and $I_\ipo$ respectively. We denote the reconstructed values at the cell interface, and the difference in the reconstructed states, by
\begin{equation}
v_\iph^- = v_i(x_\iph), \qquad v_\iph^+ = v_\ipo(x_\iph), \qquad \llbracket v \rrbracket_\iph = v_\iph^+ - v_\iph^-.
\end{equation}
Replacing the original jump $\Delta v_\iph$ in \eqref{eqn:ESflux} by the reconstructed jump $\llbracket v \rrbracket_\iph$ would lead to a higher order accurate scheme. However, the entropy stability of the scheme may be lost in the process. The following result ensures that the reconstruction does not destroy entropy stability.

\begin{lemma}[Fjordholm et al.\ \cite{FMT12}]
For each interface $\iph$, if the reconstruction satisfies the sign property
\begin{equation}\label{eqn:sign_property}
{\rm sign} (\llbracket v \rrbracket_\iph) = {\rm sign} (\Delta v_\iph)
\end{equation}
then the scheme with the numerical flux 
\begin{equation}\label{eqn:ESfluxh}
 F_\iph = F^*_\iph - \frac{1}{2} a_\iph \llbracket v \rrbracket _\iph
\end{equation}
is entropy stable.
\end{lemma}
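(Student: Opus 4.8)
The plan is to exhibit a consistent numerical entropy flux $q_\iph$ for which the semi-discrete scheme satisfies $\dd{\eta(u_i)}{t} + \frac1h(q_\iph - q_\imh)\le 0$, following the derivation of Lemma~\ref{lemma:ESF} but carrying the reconstructed jump $\llbracket v\rrbracket_\iph$ through in place of the mesh jump $\Delta v_\iph$. Since $\eta'(u)=v$, I would start from $\dd{\eta(u_i)}{t} = v_i\dd{u_i}{t} = -\frac{v_i}{h}(F_\iph - F_\imh)$, split the flux \eqref{eqn:ESfluxh} into its entropy-conservative and dissipative parts, $F_\iph = F^*_\iph - \tfrac12 a_\iph\llbracket v\rrbracket_\iph$, and treat the two contributions separately.

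For the conservative part there is nothing to prove: by definition of an entropy conservative flux (Theorem~\ref{thm:ECflux} for the basic flux, and the analogous statement for the high-order flux \eqref{eqn:EC_high}), the scheme driven by $F^*$ alone obeys $\dd{\eta(u_i)}{t} + \frac1h(q^*_\iph - q^*_\imh) = 0$, i.e.\ $v_i(F^*_\iph - F^*_\imh) = q^*_\iph - q^*_\imh$ with $q^*_\iph = \avg{v}_\iph F^*_\iph - \avg{\Psi}_\iph$. All the work therefore lies in the dissipative contribution $\frac{v_i}{2h}\bigl(a_\iph\llbracket v\rrbracket_\iph - a_\imh\llbracket v\rrbracket_\imh\bigr)$. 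The one real manipulation is the half-jump (summation-by-parts) splitting $v_i = \avg{v}_\iph - \tfrac12\Delta v_\iph = \avg{v}_\imh + \tfrac12\Delta v_\imh$. Applying the first identity to the $\iph$ term and the second to the $\imh$ term gives
\begin{multline*}
v_i\bigl(a_\iph\llbracket v\rrbracket_\iph - a_\imh\llbracket v\rrbracket_\imh\bigr)
= \bigl(\avg{v}_\iph a_\iph\llbracket v\rrbracket_\iph - \avg{v}_\imh a_\imh\llbracket v\rrbracket_\imh\bigr) \\
- \tfrac12\bigl(\Delta v_\iph\, a_\iph\llbracket v\rrbracket_\iph + \Delta v_\imh\, a_\imh\llbracket v\rrbracket_\imh\bigr).
\end{multline*}
The first bracket telescopes, so setting
\[
q_\iph := q^*_\iph - \tfrac12\,\avg{v}_\iph\, a_\iph\,\llbracket v\rrbracket_\iph
\]
(which is consistent, since a constant state is reconstructed exactly and hence $\llbracket v\rrbracket_\iph = 0$ there, reducing $q_\iph$ to $q^*_\iph$), I obtain
\begin{equation*}
\dd{\eta(u_i)}{t} + \frac1h\bigl(q_\iph - q_\imh\bigr)
= -\frac{1}{4h}\bigl(a_\iph\,\Delta v_\iph\,\llbracket v\rrbracket_\iph + a_\imh\,\Delta v_\imh\,\llbracket v\rrbracket_\imh\bigr).
\end{equation*}

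Finally I would observe that the right-hand side is nonpositive, and this is the \emph{only} place the reconstruction enters: at each interface $j+\hf$ the coefficient $a_\jph$ is nonnegative by hypothesis, and by the sign property \eqref{eqn:sign_property} the factors $\Delta v_\jph$ and $\llbracket v\rrbracket_\jph$ carry the same sign, so $a_\jph\,\Delta v_\jph\,\llbracket v\rrbracket_\jph\ge 0$. Hence the displayed right-hand side is $\le 0$, which is exactly the claimed discrete entropy inequality. I do not expect a genuine obstacle: the argument is bookkeeping in the half-jump splitting, and the conceptual content is merely that replacing $\Delta v$ by $\llbracket v\rrbracket$ leaves the dissipation term as a product of two same-signed quantities — precisely what the sign property \eqref{eqn:sign_property} guarantees, mirroring the structure of the entropy-variable dissipation in Lemma~\ref{lemma:ESF}.
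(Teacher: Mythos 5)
Your argument is correct: the half-jump splitting $v_i = \avg{v}_\iph - \tfrac12\Delta v_\iph = \avg{v}_\imh + \tfrac12\Delta v_\imh$ isolates the telescoping entropy flux $q_\iph = q^*_\iph - \tfrac12\avg{v}_\iph a_\iph\llbracket v\rrbracket_\iph$ and leaves the residual $-\tfrac{1}{4h}\bigl(a_\iph\Delta v_\iph\llbracket v\rrbracket_\iph + a_\imh\Delta v_\imh\llbracket v\rrbracket_\imh\bigr)$, which is nonpositive exactly by the sign property and $a_\jph\ge 0$. The paper itself states this lemma without proof, deferring to \cite{FMT12}; your derivation is precisely the standard argument given there (and mirrors the proof of Lemma~\ref{lemma:ESF}), so there is nothing to add.
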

It has been shown in \cite{FMT13} that ENO interpolation satisfies the sign property, and has been tested numerically in \cite{FMT12} to give accurate results. However, to the best knowledge of the authors, other available reconstruction procedures fail to satisfy this crucial property. In the following sections we present a third-order WENO reconstruction procedure that satisfies the sign property, while maintaining the desired order of accuracy.

%----------------------------------------------------------------------------
\section{A sign preserving WENO-3}\label{sec:weno}
The idea of WENO reconstruction is to use a suitable convex combination of all $2k-1$ polynomials used in the $k$-th order ENO reconstruction at a given interface, and obtain a $(2k-1)$-th order accurate reconstruction. We describe a procedure to choose the weights of the WENO scheme which ensures that the reconstruction is third-order accurate for smooth function, while satisfying the sign property. Consider the stencil shown in Figure~\ref{fig:recon} corresponding to the reconstructions at $x_\iph$. 

\subsubsection*{Reconstruction from the left:}
We first consider the reconstruction from the left side of the interface $x_\iph$. The two stencils for linear ENO reconstruction from the left are  
\[
 S_i^0 = \{x_i,x_\ipo\}, \quad S_i^1 = \{x_{i-1},x_i\}.
\]
The corresponding linear polynomials and their evaluations at $x_\iph$ are
\begin{equation*}
\begin{aligned}
 & p_i^{(0)}(x) = v_i \frac{(x - x_\ipo)}{(x_i - x_\ipo)} + v_\ipo \frac{(x-x_i)}{(x_\ipo - x_i)} \quad &\implies \qquad &v^{(0),-}_\iph = \frac{v_i}{2} + \frac{v_\ipo}{2},\\
& p_i^{(1)}(x) = v_\imo \frac{(x - x_i)}{(x_\imo - x_i)} + v_i \frac{(x-x_\imo)}{(x_i - x_\imo)} \quad &\implies \qquad &v^{(1),-}_\iph = -\frac{v_\imo}{2} + \frac{3 v_i}{2}.
\end{aligned}
\end{equation*}
Each of the above reconstructions at the interface $x_\iph$ are second-order accurate. Weighting each of these with non-negative values $w_{0,\iph}$ and $w_{1,\iph}$, respectively, we obtain the reconstructed value
\begin{equation}\label{eqn:vrecminus}
v^-_\iph := w_{0,\iph} \left( \frac{v_i}{2} + \frac{v_\ipo}{2} \right) + w_{1,\iph} \left( -\frac{v_\imo}{2} + \frac{3 v_i}{2} \right).
\end{equation}
The weights must be chosen such that third-order accuracy is achieved, so we require that
\begin{align*}
v(x_\iph) + \mathcal{O}(h^3) = v^-_\iph &= w_{0,\iph} \left( \frac{v_i}{2} + \frac{v_\ipo}{2} \right) + w_{1,\iph} \left( -\frac{v_\imo}{2} + \frac{3 v_i}{2} \right) \\*
              &= w_{0,\iph} \left(  v(x_\iph) + \frac{1}{8} v^{\prime \prime}(x_\iph)  h^2 + \mathcal{O}(h^3)\right) \\*
              &\quad + w_{1,\iph} \left(  v(x_\iph) - \frac{3}{8} v^{\prime \prime}(x_\iph)  h^2 + \mathcal{O}(h^3)\right).
\end{align*}
Comparing the left and right hand sides, we obtain the following constraints on the weights:
\begin{subequations}\label{eqn:left}
\begin{align}
%\begin{aligned}
w_{0,\iph} + w_{1,\iph} &= 1, \label{eqn:leftA}\\
C_1 := \frac{w_{0,\iph}}{8} - \frac{3w_{1,\iph}}{8} &= \mathcal{O}(h).\label{eqn:leftB}
%\end{aligned}
\end{align}
\end{subequations}

\subsubsection*{Reconstruction from the right:}
We now consider the reconstruction from the right at the interface $x_\iph$, which requires the stencils
\[
 \widetilde{S}_0 = \{x_\ipo,x_{i+2}\}, \quad \widetilde{S}_1 = \{x_i,x_\ipo\}.
\]
The corresponding polynomial and their evaluations at $x_\iph$ are
\begin{equation*}
\begin{aligned}
 &\widetilde{p}^{(0)}(x) = v_\ipo \frac{(x - x_{i+2})}{(x_\ipo - x_{i+2})} + v_{i+2} \frac{(x-x_\ipo)}{(x_{i+2}- x_\ipo)} \quad &\implies \qquad & v^{(0),+}_\iph = \frac{3v_\ipo}{2} - \frac{v_{i+2}}{2}, \\
 &\widetilde{p}^{(1)}(x) =v_i \frac{(x - x_\ipo)}{(x_i - x_\ipo)} + v_\ipo \frac{(x-x_i)}{(x_\ipo - x_i)} \quad &\implies \qquad & v^{(1),+}_\iph = \frac{v_i}{2} + \frac{v_\ipo}{2}.
\end{aligned}
\end{equation*}
Let the weights in this case be denoted by $\wt_{0,\iph}$ and $\wt_{1,\iph}$. As before, we set
\begin{equation}\label{eqn:vrecplus}
v^+_\iph := \widetilde{w}_{0,\iph} \left( -\frac{v_{i+2}}{2} + \frac{3 v_\ipo}{2} \right) + \widetilde{w}_{1,\iph} \left( \frac{v_i}{2} + \frac{v_\ipo}{2} \right),
\end{equation}
and we require
\begin{align*}
v(x_\iph) + \mathcal{O}(h^3) &= \widetilde{w}_{0,\iph} \left(  v(x_\iph) - \frac{3}{8} v^{\prime \prime}(x_\iph)  h^2 + \mathcal{O}(h^3)\right) \\
&\quad + \widetilde{w}_{1,\iph} \left( v(x_\iph)  + \frac{1}{8} v^{\prime \prime}(x_\iph)  h^2 + \mathcal{O}(h^3)\right).
\end{align*}
This enforces the following constraints on the weights:
\begin{subequations}\label{eqn:right}
\begin{align}
\widetilde{w}_{0,\iph}+ \widetilde{w}_{1,\iph} &= 1, \label{eqn:rightA}\\
C_2 := -\frac{3\widetilde{w}_{0,\iph}}{8} + \frac{\widetilde{w}_{1,\iph}}{8} &= \mathcal{O}(h).\label{eqn:rightB}
\end{align}
\end{subequations}

The weights $w_{1,\iph}, w_{1,\iph}, \widetilde{w}_{1,\iph}, \widetilde{w}_{1,\iph}$ must be chosen in accordance to \eqref{eqn:left} and \eqref{eqn:right} to ensure that the desired consistency and accuracy of the reconstruction is achieved. For the remainder of this paper we drop the $\iph$ subscript in the weights wherever it is clear that we are referring to the interface $x_\iph$.

\subsection{Properties}
We list the various crucial properties that the reconstruction needs to possess, including the sign-property.

\subsubsection{Consistency}
Using \eqref{eqn:leftB} and \eqref{eqn:rightB}, we rewrite the weights as
\[
w_0 = \frac{3}{4} + 2C_1, \quad w_1 = \frac{1}{4} - 2C_1, \quad \wt_0 = \frac{1}{4} - 2C_2, \quad \wt_1 = \frac{3}{4} + 2C_2.
\]
To ensure that the weights are non-negative and that \eqref{eqn:leftA} and \eqref{eqn:rightA} are satisfied, we require the following consistency condition:
\begin{equation}\label{eqn:cons_a}\tag{P1}
0 \leq w_0,w_1,\wt_0,\wt_1 \leq 1,
\end{equation}
or equivalently,
\begin{equation}\label{eqn:cons_b}\tag{P1'}
-\frac{3}{8} \leq C_1,C_2  \leq \frac{1}{8}.
\end{equation}

\subsubsection{Sign property}
The jump in the reconstructed variables can be written as
\begin{align*}
\llbracket v \rrbracket_\iph &=  \left[ -\frac{ \widetilde{w}_0}{2} \thl_\ipo  + \frac{\left(\widetilde{w}_0 +  w_1 \right)}{2}  -\frac{ w_1}{2} \thr_i \right] \Delta v_\iph \\
&=\frac{1}{2}\left[ \widetilde{w}_0( 1- \thl_\ipo)  + w_1(1 - \thr_i) \right] \Delta v_\iph
\end{align*}
where
\[
\thl_i = \frac{\Delta v_\iph}{\Delta v_\imh}, \qquad \thr_i = \frac{1}{\thl_i} = \frac{\Delta v_\imh}{\Delta v_\iph}.
\]
Thus, the following is an \emph{equivalent} formulation of the sign property \eqref{eqn:sign_property}, whenever $\Delta v_\iph \neq 0$:
\begin{equation}\label{eqn:signprop}\tag{P2}
\left[ \widetilde{w}_0( 1- \thl_\ipo)  + w_1(1 - \thr_i) \right] \geq 0.
\end{equation}

\subsubsection{Negation symmetry}
By  {\em negation symmetry}, we mean that the weights are not biased towards positive or negative solution values. In other words, the weights should remain unchanged under the transformation $v \mapsto -v$. The jumps accordingly transform as
\[
\Delta v_\jph \mapsto -\Delta v_\jph \qquad \forall\ j \in \mathbb{Z}.
\]
However, the jump ratios $\thl_j$ or $\thr_j$ remain unchanged. 
A sufficient condition to enforce negation symmetry is to choose $C_1,C_2$ as functions of $\thr_i,\thl_\ipo$:
\begin{equation}\label{eqn:negsym}\tag{P3}
 C_1 = C_1 (\thr_i,\thl_\ipo), \qquad C_2 = C_2 (\thr_i,\thl_\ipo).
\end{equation}

\subsubsection{Mirror property}
If we mirror the solution about the interface $x_\iph$, we would like to ensure that the weights also get mirrored about $x_\iph$. The mirroring transforms the various difference ratios as follows:
\[
%\Delta v_\iph \mapsto -\Delta v_\iph, \qquad \Delta v_\imh \mapsto - \Delta v_{i+\frac{3}{2}},  \qquad \Delta v_{i+\frac{3}{2}} \mapsto - \Delta v_\imh, \qquad
\thl_\ipo  \mapsto \thr_i, \qquad  \thr_i  \mapsto \thl_\ipo.
\]
It is straightforward to see that the weights must transform as
\[
 w_0 \mapsto \widetilde{w}_1, \qquad w_1 \mapsto \widetilde{w}_0, \qquad \widetilde{w}_0 \mapsto w_1, \qquad \widetilde{w}_1 \mapsto w_0.
\]
\emph{Assuming} that the negation symmetry \eqref{eqn:negsym} holds, the above is true \emph{if and only if}
\begin{equation}\label{eqn:mirsym}\tag{P4}
%C_1 (\thr_i,\thl_\ipo) = C_2 (\thl_\ipo,\thr_i).
C_1 (a,b) = C_2 (b,a) \qquad \forall\ a,b\in\mathbb{Z}.
\end{equation}

\subsubsection{Inner jump condition}
In addition to the sign property, we would like the reconstructed variables to satisfy the \textit{inner jump condition} in each cell $i$: 
\[
\text{sign}(v^-_\iph - v^+_\imh) =  \text{sign}(\Delta v_\iph) = \text{sign}(\Delta v_\imh)
\]
whenever the second equality holds. This property ensures that the monotonicity of the solution is preserved. The second-order ENO reconstruction satisfies this property, while it need not hold true for higher order ENO. Recalling the definitions \eqref{eqn:vrecminus}, \eqref{eqn:vrecplus} of $v^-_\iph$ and $v^+_\imh$, we obtain
 \begin{align*}
 v^-_\iph - v^+_\imh&= w_{0,\iph} \left( \frac{v_i}{2} + \frac{v_\ipo}{2} \right) + w_{1,\iph} \left( -\frac{v_\imo}{2} + \frac{3 v_i}{2} \right) \\
 &\quad- \widetilde{w}_{0,\imh} \left( -\frac{v_\ipo}{2} + \frac{3 v_i}{2} \right) - \widetilde{w}_{1,\imh} \left( \frac{v_i}{2} + \frac{v_\imo}{2} \right) \\
%&= w_{0,\iph} \left( v_i + \frac{1}{2}\Delta v_\iph \right) + w_{1,\iph} \left( v_i + \frac{1}{2}\Delta v_\imh \right) \\
% &\quad- \widetilde{w}_{0,\imh} \left( v_i - \frac{1}{2}\Delta v_\iph \right) - \widetilde{w}_{1,\imh} \left(v_i - \frac{1}{2}\Delta v_\imh \right) \\
%&= ( w_{0,\iph} + w_{1,\iph} - \widetilde{w}_{0,\imh} - \widetilde{w}_{1,\imh}) v_i \\
%&\quad+ \Delta v_\iph \frac{(w_{0,\iph} + \widetilde{w}_{0,\imh})}{2} + \Delta v_\imh \frac{(w_{1,\iph} + \widetilde{w}_{1,\imh})}{2}\\
&= \Delta v_\iph \frac{w_{0,\iph} + \widetilde{w}_{0,\imh}}{2} + \Delta v_\imh \frac{w_{1,\iph} + \widetilde{w}_{1,\imh}}{2}.
 \end{align*} 
By the assumption \eqref{eqn:cons_a} of non-negativity of the weights, the coefficients of $\Delta v_\imh$ and $\Delta v_\iph$ in the above expression are non-negative, and hence the inner jump condition is \emph{automatically satisfied}.

\subsubsection{Accuracy}\label{sec:accuracy}
In general, conditions \eqref{eqn:leftB} and \eqref{eqn:rightB} require $C_1$ and $C_2$ to be $\mathcal{O}(h)$ for smooth solutions. However, this condition can be relaxed in scenarios in which $v^{\prime \prime}(\hat{x}) = 0$, for some $\hat{x}$ such that $|\hat{x} - x_\iph| = \mathcal{O}(h)$. Figure \ref{fig:specialcase} depicts a few situations in which this can happen.
\begin{figure}[h]
\begin{center}
\subfigure[]{\includegraphics[width=0.30\textwidth]{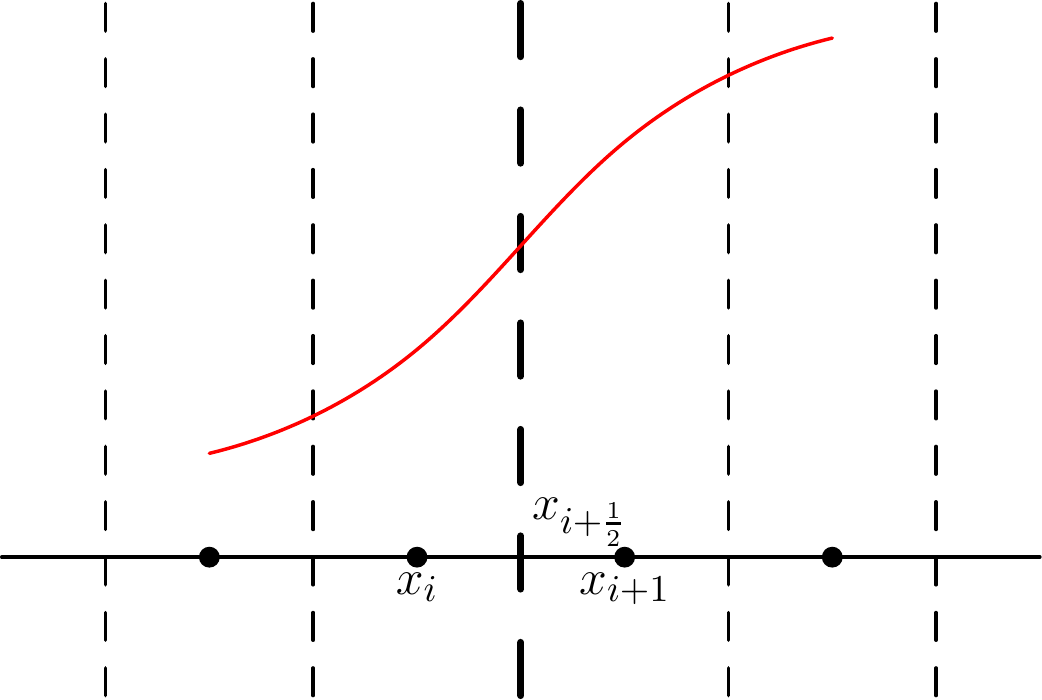}}
\subfigure[]{\includegraphics[width=0.30\textwidth]{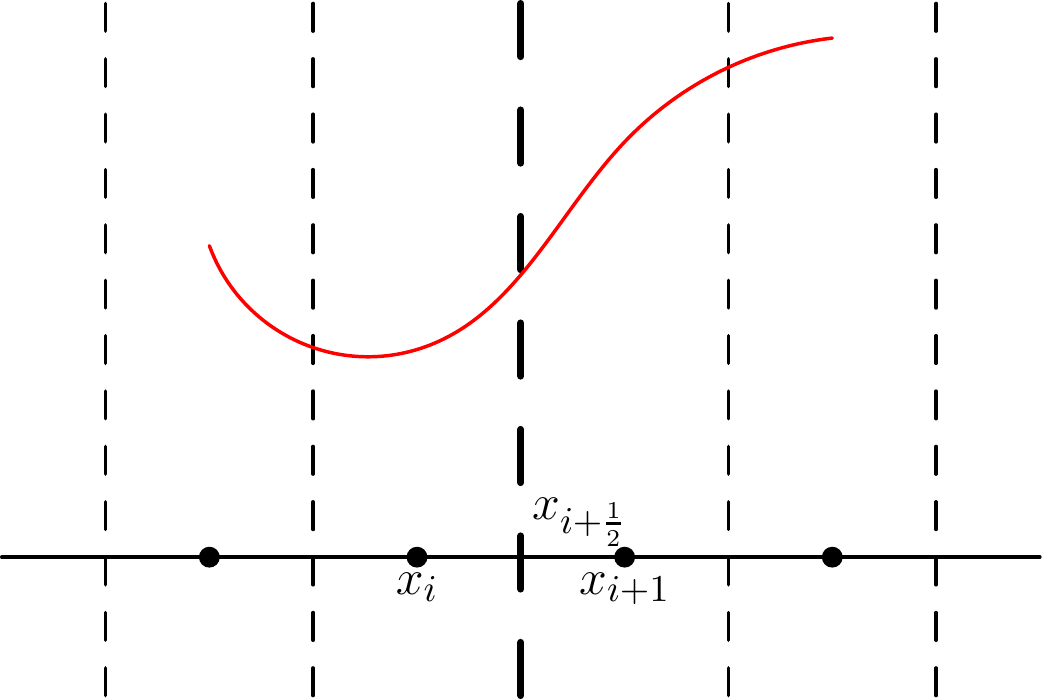}}
\subfigure[]{\includegraphics[width=0.30\textwidth]{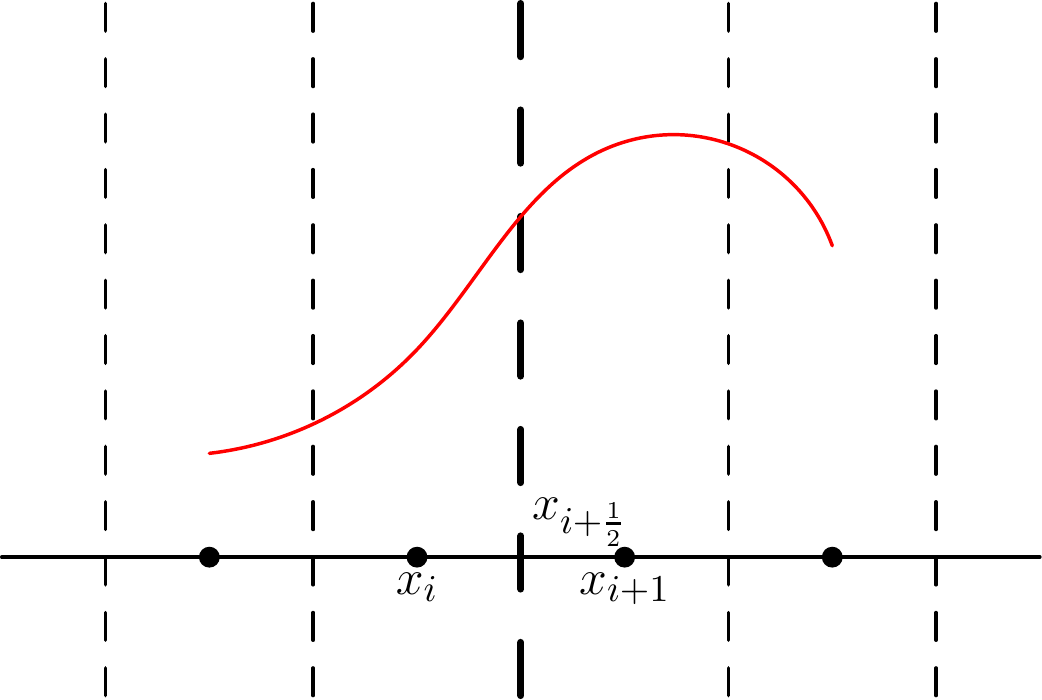}}\\
\subfigure[]{\includegraphics[width=0.30\textwidth]{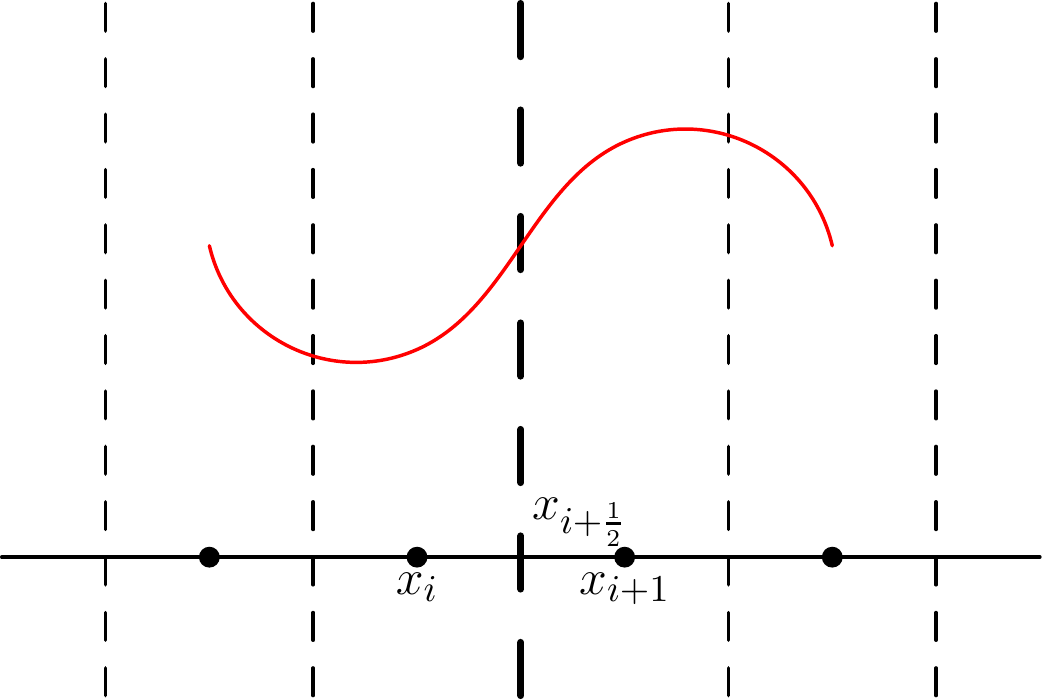}}
\subfigure[]{\includegraphics[width=0.30\textwidth]{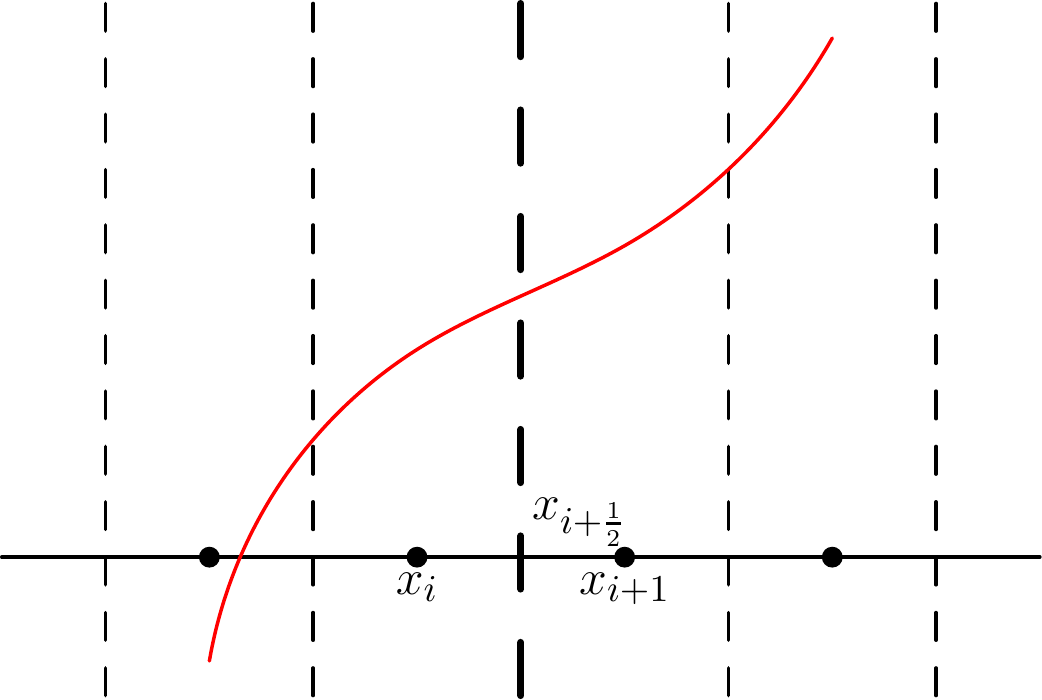}}
\subfigure[]{\includegraphics[width=0.30\textwidth]{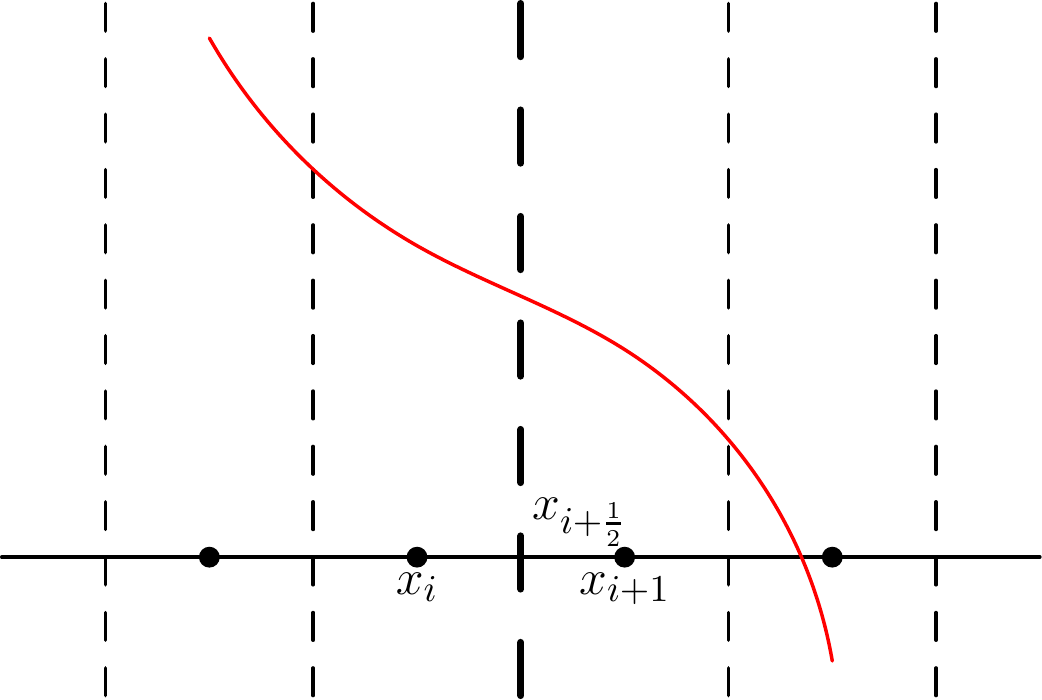}}
\caption{Special cases when $v^{\prime \prime}(\hat{x}) = 0$.}
\label{fig:specialcase}
\end{center}
\end{figure}
Assuming sufficient regularity on the solution, the following is true in these special scenarios:
\[
 v^{\prime \prime}(x_\iph) = v^{\prime \prime}(\hat{x}) + (x_\iph - \hat{x}) v^{\prime \prime \prime}(\hat{x}) + \mathcal{O}(h^2) = \mathcal{O}(h).
\]
This in turn implies that each of the linear polynomials used for reconstruction gives a third-order accurate approximation of the solution at $x_\iph$. Thus, \eqref{eqn:leftB} and \eqref{eqn:rightB} become redundant. In other words, the reconstruction is third-order accurate provided
\begin{equation}\label{eqn:variableorder}\tag{P5}
C_1, C_2 =  \begin{cases} \mathcal{O}(h), \hspace{3.2cm} \text{ in GC}  \\
                                       \text{no order restriction}, \hspace{1cm} \text{ in SC} \end{cases}
\end{equation}
where we use the abbreviations GC and SC to denote general cases and special cases respectively.

%----------------------------------------------------------------------------
\subsection{The feasible region}\label{sec:feas_region}
In order to choose weights satisfying \eqref{eqn:cons_a}--\eqref{eqn:variableorder}, we first analyse how the weights behave under the above constraints. We will look at six different scenarios, depending on the values of $\thr_i, \thl_\ipo$. In each scenario we will try to determine the {\em feasible region}, which corresponds the region where the weights satisfy \eqref{eqn:cons_a} and \eqref{eqn:signprop}. The remaining properties will be considered in Section \ref{sec:sp_weno} while trying to construct explicit weights. We define 
\[
\psi^+_\iph := \frac{( 1- \thl_\ipo)}{(1 - \thr_i)}, \qquad \psi^-_\iph := \frac{1}{\psi^+_\iph}.
\]
The $\iph$ subscript will be dropped whenever it is obvious that we are referring to the interface $\iph$. We also introduce the notation
\[
 \mathcal{L}:= \begin{cases} 
                       \frac{C_1}{\frac{1}{8}(1+\psi^+)} +  \frac{C_2}{\frac{1}{8}(1+\psi^-)}, &\mbox{if } \psi^+ \neq -1 \\
                       C_1 - C_2 + 1, &\mbox{if } \psi^+ = \psi^- = -1.
                      \end{cases} 
\]
%\textcolor{red}{Note that the line $\mathcal{L} = 1$ degenerates to $C_1 = C_2$ when $\psi^+ = \psi^- = -1$.}\footnote{This is unclear to me -- if $\psi^\pm=-1$ then $\mathcal{L}$ is undefined.} 
Furthermore, we denote the open box $\left(-\frac{3}{8},\frac{1}{8} \right) \times \left(-\frac{3}{8},\frac{1}{8} \right)$ by $\mathcal{B}$. Recall that the consistency constraint \eqref{eqn:leftB} requires that $(C_1,C_2)\in\overline{\mathcal{B}}$.

\subsubsection{Case 1: $\thr_i,\thl_\ipo > 1$}
%In this case we have
%\[
%\psi^+ > 0, \qquad \psi^-> 0, \qquad 1+\psi^+ > 1, \qquad 1+\psi^- > 1
%\]
The qualitative nature of the (smooth) solution for this case is indicated in Figure \ref{fig:case1}. The solution is clearly not strictly convex or concave in the stencil under consideration, even if the soultion is more oscillatory than that shown in Figure~\ref{fig:case1}. Thus, we are in the SC regime, which implies that no order of accuracy restrictions must be imposed on $C_1,C_2$. To ensure that \eqref{eqn:signprop} holds, we need 
\[
 \wt_0 = w_1 = 0 \qquad \iff \qquad C_1 = C_2 = \frac{1}{8}.
\]
Note that this leads to precisely the ENO-2 stencil selection, which is suitable for discontinuous solutions as well.
\begin{figure}
\begin{center}
\subfigure[]{\includegraphics[width=0.35\textwidth]{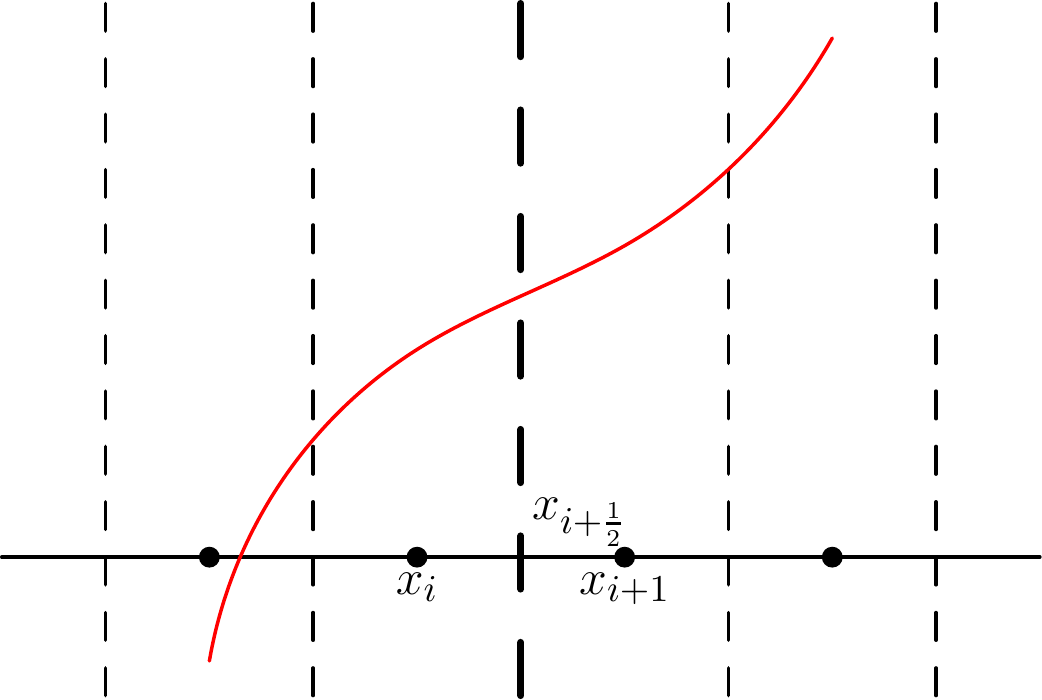}} \hspace{2ex}
\subfigure[]{\includegraphics[width=0.35\textwidth]{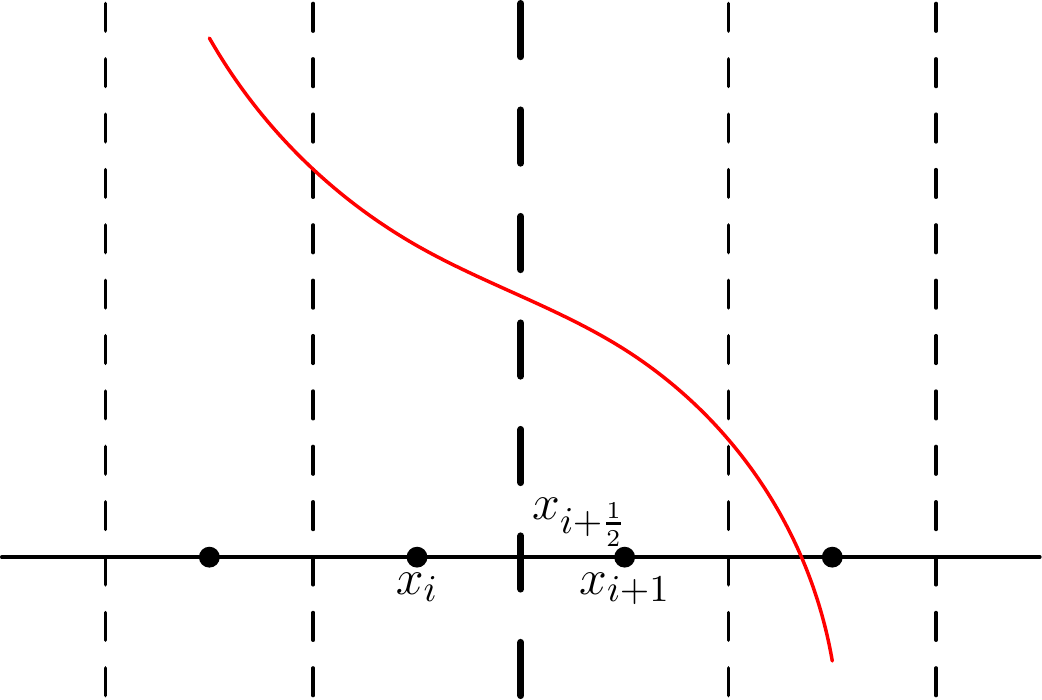}}\\
\caption{Possible scenarios for Case 1.}
\label{fig:case1}
\end{center}
\end{figure}

\subsubsection{Case 2: $\thr_i< 1, \ \thl_\ipo > 1$} 
In this case we have
\[
\psi^+ < 0, \qquad 1+\psi^+ < 1.
\]
%with the solution profiles depicted in Figure~\ref{fig:case2}. 
Case 2 falls into the GC regime. Thus, we must choose $C_1$ and $C_2$ carefully so as not to violate the accuracy condition. The sign property \eqref{eqn:signprop} will hold if
\[
 w_1 \geq -\wt_0 \psi^+ \quad \iff \quad \left(\frac{1}{4} - 2C_1\right) \geq - \left(\frac{1}{4} - 2C_2 \right) \psi^+ \quad \iff \quad C_1 + \psi^+ C_2 \leq \frac{1}{8}(1 + \psi^+).
\]
Thus, we have the following constraints on $C_1$, $C_2$:
\begin{align*}
-\frac{3}{8} < C_1,C_2 &< \frac{1}{8}, \\                                       
\mathcal{L} &\leq 1  \hspace{1cm} \text{if } -1 \leq \psi^+<0 ,\\ 
\mathcal{L} &\geq 1\hspace{1cm} \text{if }  \psi^+<-1.
\end{align*}
The feasible region for $C_1$, $C_2$ is shown in Figure \ref{fig:case23fr} (in dark grey).
%\begin{figure}
%\begin{center}
%\subfigure[]{\includegraphics[width=0.45\textwidth]{case2a}}
%\subfigure[]{\includegraphics[width=0.45\textwidth]{case2b}}\\
%\subfigure[]{\includegraphics[width=0.45\textwidth]{case2c}}
%\subfigure[]{\includegraphics[width=0.45\textwidth]{case2d}}\\
%\caption{Possible scenarios for Case 2.}
%\label{fig:case2}
%\end{center}
%\end{figure}
\begin{figure}
\begin{center}
\subfigure[$\psi^+ = -0.5 \in (-1,0)$]{\includegraphics[width=0.49\textwidth]{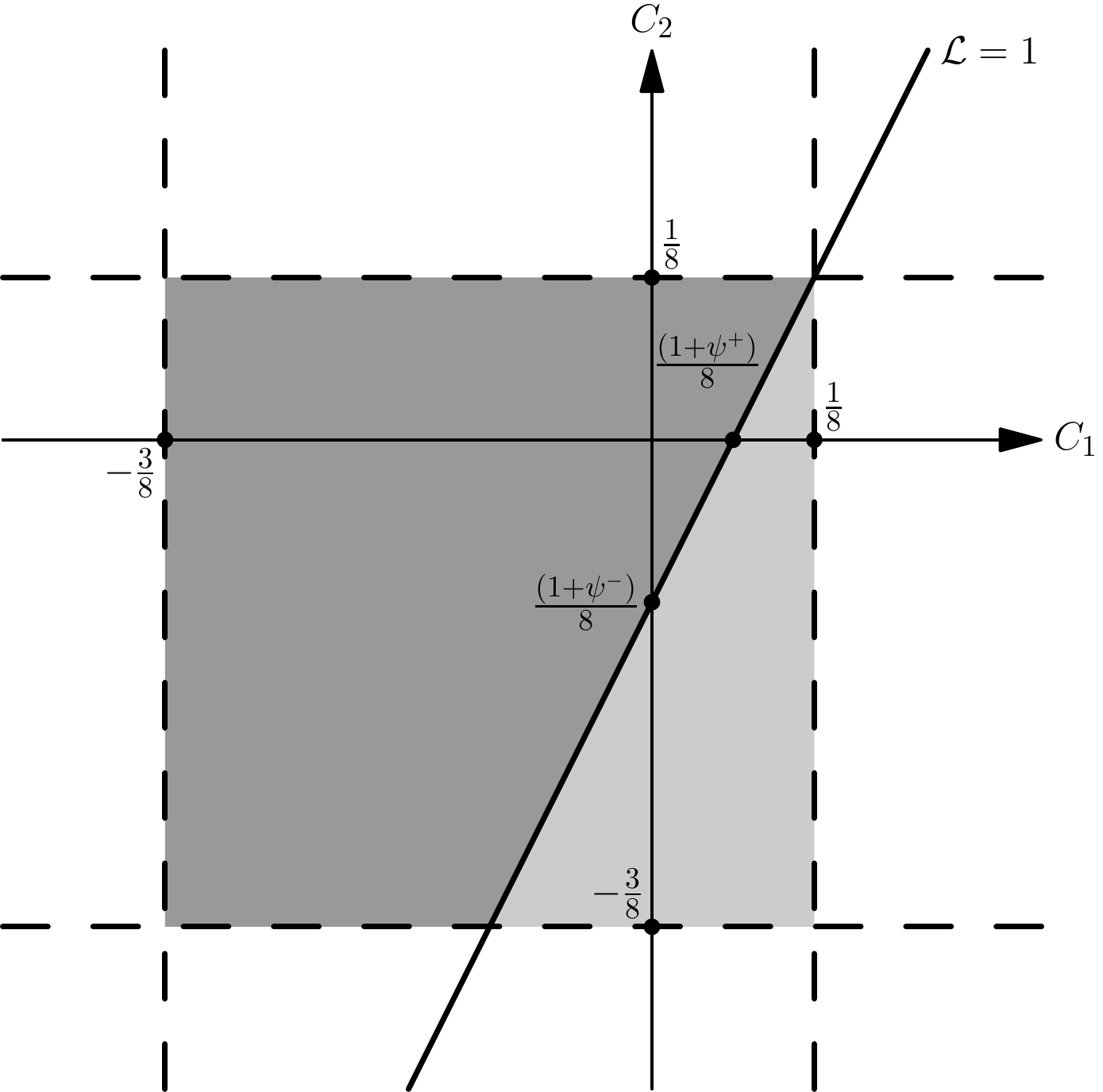}}
\subfigure[$\psi^+ = -1.5 \in (-\infty,-1)$]{\includegraphics[width=0.49\textwidth]{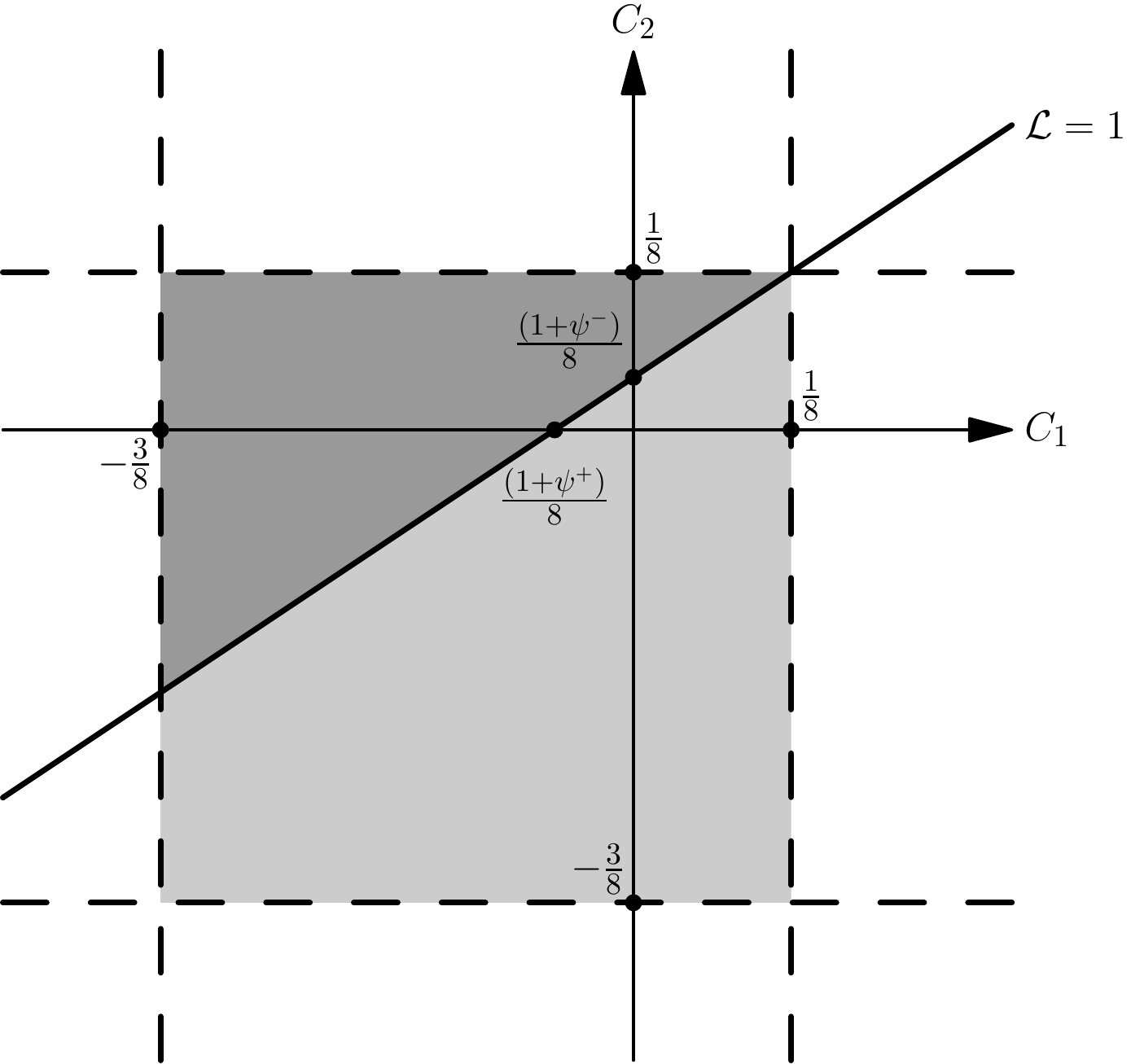}}\\
\caption{Feasible region for Case 2 (dark grey) and Case 3 (light grey).}
\label{fig:case23fr}
\end{center}
\end{figure}

\subsubsection{Case 3: $\thr_i> 1,\ \thl_\ipo < 1$} 
Similiar to case 2, we have 
\[
\psi^+ < 0, \qquad 1+\psi^+ < 1
\]
with the solutions falling into the GC regime in general. The sign property holds if
\[
 \wt_0 \geq -w_1 \psi^- \quad \iff \quad \left(\frac{1}{4} - 2C_2 \right) \geq - \left(\frac{1}{4} - 2C_1 \right) \psi^- \quad \iff \quad C_1 + \psi^+ C_2 \geq \frac{1}{8}(1 + \psi^+).
\]
Comparing the last equivalent condition above with that observed for case 2, we see that the inequality has been flipped. Thus, we have the following  constraints on $C_1, C_2$:
\begin{align*}
-\frac{3}{8} < C_1,C_2 &< \frac{1}{8}, \\*                                       
\mathcal{L} &\geq 1  \hspace{1cm} \text{if } -1 \leq \psi^+<0,\\*
\mathcal{L} &\leq 1\hspace{1cm} \text{if }  \psi^+<-1.
\end{align*}
The feasible region for $C_1,C_2$ is shown in Figure~\ref{fig:case23fr} (in light grey).
%\begin{figure}
%\begin{center}
%\subfigure[]{\includegraphics[width=0.45\textwidth]{case3a}}
%\subfigure[]{\includegraphics[width=0.45\textwidth]{case3b}}\\
%\subfigure[]{\includegraphics[width=0.45\textwidth]{case3c}}
%\subfigure[]{\includegraphics[width=0.45\textwidth]{case3d}}\\
%\caption{Possible scenarios for Case 3.}
%\label{fig:case3}
%\end{center}
%\end{figure}

%\begin{remark} There is duality between the feasible regions for cases 2 and 3, about the line $\mathcal{L}=1$.\footnote{Isn't this just what you said above (in ``Comparing the last...'')?}
%\end{remark}
\begin{remark}\label{rem:case23a}
Any point in $ \{(C_1,C_2) : \mathcal{L} = 1\} \cap \mathcal{B}$, satisfies the constraints in cases 2 and 3. This fact will be exploited in constructing explicit weights.
\end{remark} 

\subsubsection{Case 4: $\thl_\ipo = 1$} 
In this case the solution either has a linear region or is oscillatory without being strictly convex or concave. Thus, this case falls in the SC regime. In order to satisfy \eqref{eqn:signprop}, we require $w_1$ to have the same sign as $(1-\thr_i)$.
%\begin{figure}[!htbp]
%\begin{center}
%\subfigure[]{\includegraphics[width=0.45\textwidth]{case4a}}
%\subfigure[]{\includegraphics[width=0.45\textwidth]{case4b}}\\
%\subfigure[]{\includegraphics[width=0.45\textwidth]{case4c}}
%\subfigure[]{\includegraphics[width=0.45\textwidth]{case4d}}\\
%\caption{Possible scenarios for Case 4.}
%\label{fig:case4}
%\end{center}
%\end{figure}
If $\thr_i \leq 1$, then $(C_1,C_2)$ can be chosen as any point in $\overline{\mathcal{B}}$. However, if $\thr_i > 1$, then to satisfy \eqref{eqn:cons_a} and \eqref{eqn:signprop}, we must take $C_1 =\frac{1}{8}$, while $C_2$ can be any value in $\left[-\frac{3}{8},\frac{1}{8} \right]$. This would lead to $w_0 = 1, w_1 = 0$ when  $\thr_i > 1$, which is identical to the ENO-2 stencil selection. 

\subsubsection{Case 5: $\thr_i = 1$} 
This case is similar to case 4, with the values of $\thr_i$ and $\thl_\ipo$ interchanged. If $\thl_\ipo \leq 1$, then $(C_1,C_2)$ can be chosen as any point in $\overline{\mathcal{B}}$. If $\thl_\ipo > 1$, then we must take $C_2 =\frac{1}{8}$ while $C_1$ can be any value in $\left[-\frac{3}{8},\frac{1}{8} \right]$.
%\begin{figure}[!htbp]
%\begin{center}
%\subfigure[]{\includegraphics[width=0.45\textwidth]{case5a}}
%\subfigure[]{\includegraphics[width=0.45\textwidth]{case5b}}\\
%\subfigure[]{\includegraphics[width=0.45\textwidth]{case5c}}
%\subfigure[]{\includegraphics[width=0.45\textwidth]{case5d}}\\
%\caption{Possible scenarios for Case 5.}
%\label{fig:case5}
%\end{center}
%\end{figure}

\subsubsection{Case 6: $\thr_i, \thl_\ipo < 1$} 
In this final case, we have
\[
\psi^+ > 0,  \qquad 1+\psi^+ > 1.
\]
Note that this was true in case 1 as well. By an argument similar to the one made in case 1, we can show that case 6 falls into the SC regime (see Figure \ref{fig:case6}). Furthermore, the sign property is satisfied as long as the consistency condition \eqref{eqn:cons_a} holds true.
\begin{figure}[!htbp]
\begin{center}
\subfigure[]{\includegraphics[width=0.32\textwidth]{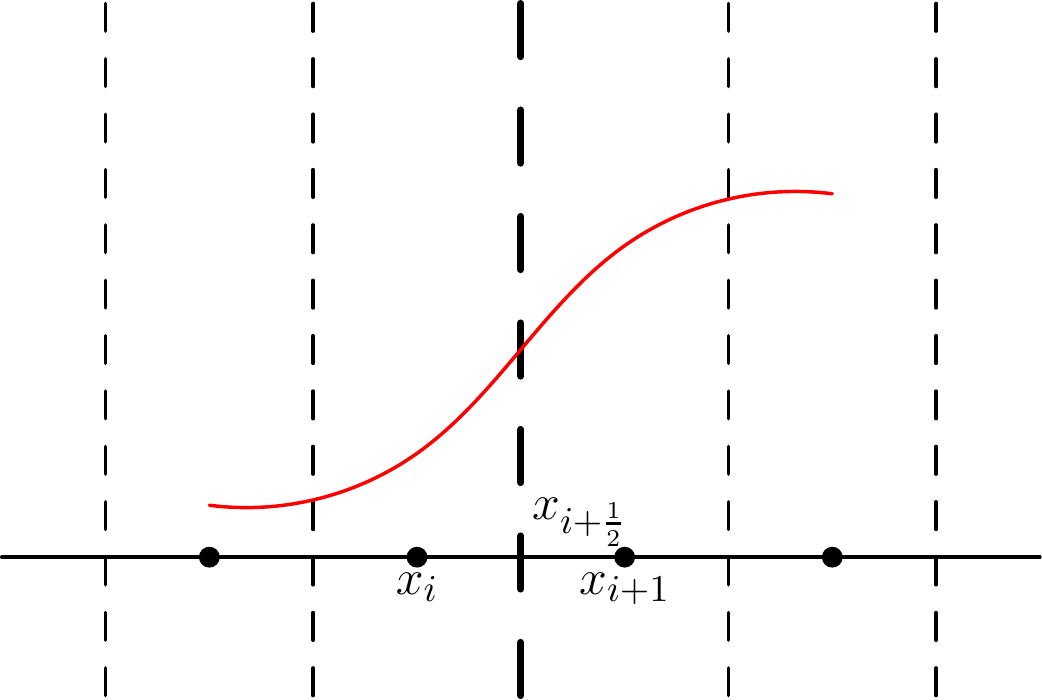}}
\subfigure[]{\includegraphics[width=0.32\textwidth]{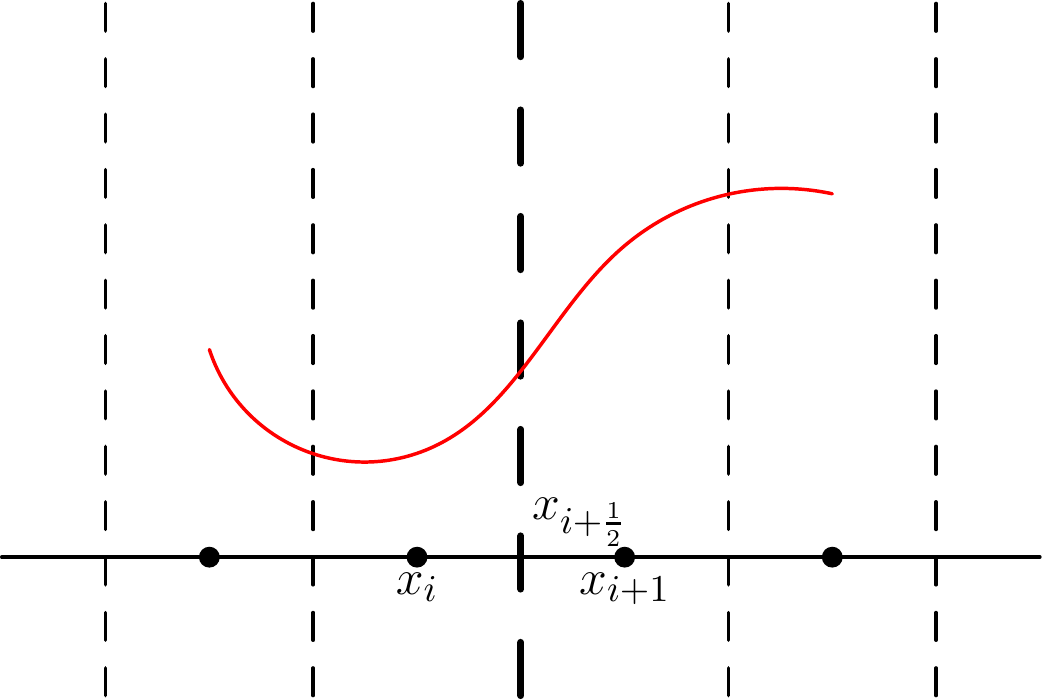}}
\subfigure[]{\includegraphics[width=0.32\textwidth]{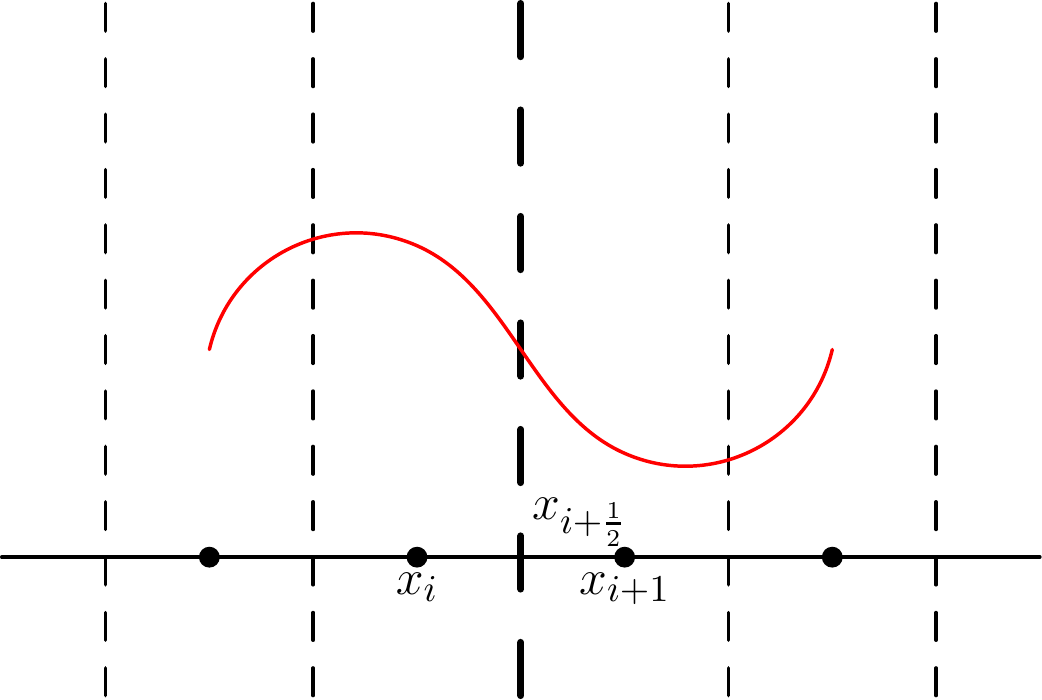}}
\caption{Possible scenarios for Case 6.}
\label{fig:case6}
\end{center}
\end{figure}

\section{Explicit weights: SP-WENO}\label{sec:sp_weno}
We now make an explicit choice for the weights, based on the case by case analysis in the previous section. Recall that from an accuracy point of view, the optimal choice of weights is $(w_0,w_1) = (\unitfrac{3}{4},\unitfrac{1}{4})$ and $(\widetilde{w}_0,\widetilde{w}_1) = (\unitfrac{1}{4},\unitfrac{3}{4})$, or equivalently, $(C_0,C_1) = (0,0)$. However, the point $(C_0,C_1) = (0,0)$ in many cases does not lie in the feasible region (cf.\ Section \ref{sec:feas_region}).%, we instead choose $(C_0,C_1)$ to be the point in the feasible region closest to the origin, as measured in the Euclidean norm.

For cases 2 and 3, we choose $(C_0,C_1)$ to be the point in $ \{\mathcal{L} = 1\} \cap \mathcal{B}$, by virtue of Remark \ref{rem:case23a}. Furthermore, $C_0$ and $C_1$ must both be of order $\mathcal{O}(h)$ in these two cases, in order to satisfy \eqref{eqn:variableorder}. Thus, we choose $(C_0,C_1)$ to be the point on $\mathcal{L} = 1$ closest to the origin, as measured in the Euclidean norm:
\begin{equation}\label{eqn:opt1case23}
C_1(\thr_i,\thl_\ipo) = \begin{cases}\frac{1}{8} \left(\frac{f^+}{(f^+)^2 + (f^-)^2}\right) &\text{if } \psi^+ \neq -1 \\
0 & \text{otherwise,}\end{cases}
\qquad
C_2(\thr_i,\thl_\ipo) = C_1(\thl_\ipo,\thr_i),
\end{equation}
where we have defined
\[
f^+(\thr_i,\thl_\ipo) := \begin{cases} \frac{1}{1 + \psi^+} &\text{if } \thr_i \neq 1, \psi^+ \neq -1 \\
1 & \text{otherwise,} \end{cases}
\qquad
f^-(\thr_i,\thl_\ipo) := f^+(\thl_\ipo,\thr_i).
\]
For smooth functions $v$ we have $(1+ \psi^+), (1-\psi^-) = \mathcal{O}(h)$, so $C_1,C_2 = \mathcal{O}(h)$ for cases 2 and 3, and hence \eqref{eqn:variableorder} is clearly satisfied. 

For cases 1 and 6, the point on the line $\mathcal{L}=1$ closest to the origin need not lie in the feasible region. Furthermore, $\psi^+$ and $\psi^-$ need not be defined for cases 4 and 5, thus the line $\mathcal{L}=1$ is not defined. Note that for these remainder cases, there is no order restriction on $(C_0,C_1)$. Going through a case-by-case analysis, we propose the following extension of \eqref{eqn:opt1case23}:
\begin{equation}\label{eqn:opt1}
C_1(\thr_i,\thl_\ipo) = \begin{cases}
	\frac{1}{8} \left(\frac{f^+}{(f^+)^2 + (f^-)^2}\right) &\text{if } \thr_i \neq 1,\ \psi^+ < 0,\ \psi^+ \neq -1 \\
	0 &\text{if } \thr_i \neq 1, \psi^+ = -1 \\
	-\frac{3}{8} & \text{if } \thr_i = 1 \text{ or } \psi^+ \geq 0,\ |\thr_i|\leq 1 \\
	\frac{1}{8} & \text{if } \psi^+ \geq 0,\ |\thr_i|> 1
\end{cases}
\end{equation}
and $C_2(\thr_i,\thl_\ipo) = C_1(\thl_\ipo,\thr_i),$ as before.

By virtue of lying in the feasible region, the above choices of $C_1,C_2$ automatically satisfy the consistency and sign properties \eqref{eqn:cons_a}, \eqref{eqn:signprop}. By definition they satisfy the negation and mirror symmetry properties \eqref{eqn:negsym} and \eqref{eqn:mirsym}, and through a case-by-case analysis, it can be seen that the weights also satisfy the accuracy condition \eqref{eqn:variableorder}.

We refer to the third-order WENO-type reconstruction method as \emph{SP-WENO}.

\begin{remark}
In the above discussion, we have assumed that $\Delta v_\iph \neq 0$. If $\Delta v_\iph = 0$, then the weights are chosen to be $w_1 = \wt_0 = 0$, leading to $\llbracket v \rrbracket_\iph = 0$.
\end{remark}
\begin{remark}\label{rem:case23b}
It can be shown that choosing $C_1,C_2$ to be a point on the line $\mathcal{L} = 1$ ensures that the reconstructed states are equal, i.e.\ $v_\iph^- = v_\iph^+$. Thus, $\llbracket v \rrbracket_\iph = 0$ for cases 2 and 3.
\end{remark}
\begin{remark}\label{rem:tvdprop}
During the making of this paper, we were able to find several other WENO-3 weights satisfying the above mentioned properties. The weights described by \eqref{eqn:opt1} gave the best numerical results among all the possible options considered, especially near discontinuous solutions. We were also able to construct weights which ensured the reconstruction to be TVD. However, the reconstruction suffered from loss of accuracy near smooth extrema, leading to overall second-order accuracy. This is to be expected; cf.\ Osher, Chakravarthy \cite{OSHER84b}. Furthermore, the search for TVD property in the set-up of TeCNO schemes would be futile, as the high-order entropy conservative fluxes used do not lead to a TVD scheme.
\end{remark}

\subsection{Stability estimates}
We now show that it is possible to estimate the reconstructed jumps in terms of the original jumps. In order to do so, we first express the reconstructed jumps in terms of the original jumps.

In case 1, we have $\thr_i,\thl_\ipo > 1$ and $w_1 = \wt_0 = 0$. Thus, the reconstructed states 
\[
v_\iph^- = v_\iph^+ = \frac{1}{2}(v_i + v_\ipo)
\]
have zero jump. This is also true for cases 2--3, by virtue of Remark \ref{rem:case23b}. Proceeding in a similar manner for cases 4--6, we find that the jump in reconstructed states is
\begin{equation*}
 \llbracket v \rrbracket_\iph =  \begin{cases}
                                                0 &  \text{if } \hspace{1cm}  \begin{rcases}
                                                 \thr_i > 1 \text{ and } \thl_\ipo > 1 \hspace{2cm} \text{ (case 1)} \\
                                                \thr_i < 1 \text{ and } \thl_\ipo > 1 \hspace{2cm} \text{ (case 2)}\\
                                                \thr_i > 1 \text{ and } \thl_\ipo < 1 \hspace{2cm} \text{ (case 3)}\\
                                                  |\thr_i| >1 \text{ and } \thl_\ipo = 1 \hspace{1.8cm} \text{ (case 4)}\\
                                                  \thr_i = 1 \text{ and } |\thl_\ipo| >  1 \hspace{1.8cm} \text{ (case 5)}\\
                                                   \thr_i <-1 \text{ and } \thl_\ipo < -1 \hspace{1.4cm} \text{ (case 6)}\end{rcases} \Omega_0
                                                   \\
                                                   \\
                                                  \frac{1}{2}(\Delta v_\iph - \Delta v_\imh) &\text{if } \hspace{1cm}  \begin{rcases}
                                                   |\thr_i| \leq 1 \text{ and } \thl_\ipo = 1 \hspace{1.8cm} \text{ (case 4)}\\
                                                -1 \leq \thr_i < 1 \text{ and } \thl_\ipo <-1 \hspace{0.8cm} \text{ (case 6)}\end{rcases} \Omega_1
                                                   \\
                                                   \\
                                                 \frac{1}{2}(\Delta v_\iph - \Delta v_{i + \frac{3}{2}}) &\text{if } \hspace{1cm}  \begin{rcases}
                                                  \thr_i = 1 \text{ and } |\thl_\ipo| \leq 1 \hspace{1.8cm} \text{ (case 5)}\\
                                               \thr_i < -1 \text{ and } -1 \leq \thl_\ipo <1 \hspace{0.6cm} \text{ (case 6)}\end{rcases} \Omega_2
                                                   \\
                                                   \\
                                               \Delta v_\iph - \frac{1}{2}(\Delta v_\imh + \Delta v_{i + \frac{3}{2}}) &\text{if }\hspace{1cm}  \begin{rcases} 
                                               -1 \leq \thr_i, \thl_\ipo < 1 \hspace{2.4cm} \text{ (case 6)} \end{rcases} \Omega_3
                                               \end{cases}
\end{equation*}

\begin{lemma}[Bounds on jumps] We have the following estimate on the jump in the SP-WENO reconstruction:
\begin{equation}\label{eqn:stab_bound}
\left| \llbracket v \rrbracket_\iph \right| \leq 2 \left| \Delta v_\iph \right| \quad \forall\ i \in \mathbb{Z}.
\end{equation}
\end{lemma}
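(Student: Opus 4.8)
The plan is to exploit the explicit case-by-case formula for $\llbracket v \rrbracket_\iph$ displayed just above the lemma, reducing the bound \eqref{eqn:stab_bound} to a finite collection of elementary inequalities, one per regime $\Omega_0,\Omega_1,\Omega_2,\Omega_3$. The regime $\Omega_0$ is immediate, since there $\llbracket v \rrbracket_\iph = 0$ and the bound is trivial. For the remaining three regimes I would rewrite everything in terms of the jump ratios $\thr_i = \Delta v_\imh/\Delta v_\iph$ and $\thl_\ipo = \Delta v_{i+\frac32}/\Delta v_\iph$, so that e.g.\ $\frac{1}{2}(\Delta v_\iph - \Delta v_\imh) = \frac{1}{2}\Delta v_\iph(1 - \thr_i)$, and the claimed estimate becomes a bound on $|1-\thr_i|$, $|1-\thl_\ipo|$, or $|1 - \frac12\thr_i - \frac12\thl_\ipo|$ by $2$ (resp.\ $4$ for $\Omega_3$ before dividing).

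The key steps, in order: First, in regime $\Omega_1$ we have $\llbracket v \rrbracket_\iph = \frac12\Delta v_\iph(1-\thr_i)$ with the side condition $|\thr_i|\le 1$; hence $|1-\thr_i|\le 1 + |\thr_i| \le 2$, giving $|\llbracket v\rrbracket_\iph|\le |\Delta v_\iph| \le 2|\Delta v_\iph|$. Second, regime $\Omega_2$ is identical by symmetry, with $\thr_i$ replaced by $\thl_\ipo$ and the side condition $|\thl_\ipo|\le 1$. Third, in regime $\Omega_3$ we have $\llbracket v\rrbracket_\iph = \Delta v_\iph\bigl(1 - \tfrac12\thr_i - \tfrac12\thl_\ipo\bigr)$ with $-1\le \thr_i,\thl_\ipo < 1$; the triangle inequality gives $|1 - \tfrac12\thr_i - \tfrac12\thl_\ipo| \le 1 + \tfrac12|\thr_i| + \tfrac12|\thl_\ipo| \le 1 + \tfrac12 + \tfrac12 = 2$, which is exactly the required factor. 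Finally I would note that the case $\Delta v_\iph = 0$ is covered by the remark immediately preceding (then $\llbracket v\rrbracket_\iph = 0$), so the bound holds for all $i\in\mathbb{Z}$.

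I do not expect a genuine obstacle here — the content is already front-loaded into the derivation of the piecewise formula for $\llbracket v\rrbracket_\iph$, and the lemma is essentially a bookkeeping corollary. The one point requiring a little care is making sure the side conditions attached to each regime are the ones actually used: in $\Omega_3$ the bound $|\thr_i|\le 1$ is $|\thr_i|\le 1$ but the displayed condition is $-1\le\thr_i<1$, so $|\thr_i|\le 1$ indeed holds (and similarly for $\thl_\ipo$), and it is precisely the sharpness of the constant $2$ that forces one to use both bounds simultaneously rather than a cruder estimate. If one wanted, one could remark that the constant $2$ is attained in the limit $\thr_i,\thl_\ipo\to -1$ within $\Omega_3$, so the estimate \eqref{eqn:stab_bound} is sharp; this is the ``at most twice as large'' statement promised in the introduction.
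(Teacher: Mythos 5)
Your proposal is correct and follows essentially the same route as the paper: dispose of $\Delta v_\iph=0$ and $\Omega_0$ trivially, then use the side conditions $|\thr_i|\le 1$ and/or $|\thl_\ipo|\le 1$ in each of $\Omega_1,\Omega_2,\Omega_3$ to bound the ratio $\llbracket v\rrbracket_\iph/\Delta v_\iph$. The only cosmetic difference is that you phrase the estimates via the triangle inequality on the jump ratios (and add a sharpness observation), whereas the paper bounds the quotient directly; the content is identical.
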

\begin{proof}
If $\Delta v_\iph = 0$, then the estimate estimate clearly holds as $\llbracket v \rrbracket_\iph = 0$ by construction of SP-WENO. Thus, we assume $\Delta v_\iph \neq 0$. Furthermore, the above estimate holds trivially for $(\thr_i,\thl_\ipo) \in \Omega_0$. 

If $(\thr_i,\thl_\ipo) \in \Omega_1$, then 
\[
|\thr_i| \leq 1 \quad \iff \quad -1 \leq \frac{\Delta v_\imh}{\Delta v_\iph} \leq 1.
\]
Thus,
\[
\frac{\llbracket v \rrbracket_\iph}{\Delta v_\iph} = \frac{1}{2} - \frac{\Delta v_\imh}{2\Delta v_\iph} \leq 1 \quad \implies \quad \left| \llbracket v \rrbracket_\iph \right| \leq  \left| \Delta v_\iph \right| < 2  \left| \Delta v_\iph \right|
\]
Similarly, if $(\thr_i,\thl_\ipo) \in \Omega_2$, then 
\[
|\thl_\ipo| \leq 1 \quad \iff \quad -1 \leq \frac{ \Delta v_{i + \frac{3}{2}}}{\Delta v_\iph} \leq 1
\]
Thus,
\[
\frac{\llbracket v \rrbracket_\iph}{\Delta v_\iph} = \frac{1}{2} - \frac{ \Delta v_{i + \frac{3}{2}}}{2\Delta v_\iph} \leq 1 \quad \implies \quad \left| \llbracket v \rrbracket_\iph \right| \leq  \left| \Delta v_\iph \right| < 2  \left| \Delta v_\iph \right|
\]
Finally, if $(\thr_i,\thl_\ipo) \in \Omega_3$, then
\[
|\thr_i|,\ |\thl_\ipo|\leq 1
\]
Repeating the above arguments, we once again get
\[
\left| \llbracket v \rrbracket_\iph \right| \leq 2 \left| \Delta v_\iph \right|. %\qedhere
\]
\end{proof}
\begin{remark}
The bounding constant $2$ on the right-hand side of \eqref{eqn:stab_bound} is identical to the one obtained with ENO-2 \cite{FMT13}, but smaller than that obtained with ENO-3. Thus, SP-WENO leads to tighter stability bounds for higher order accuracy, as compared to its ENO counterparts.
\end{remark}  
\section{Numerical results}\label{sec:numerical_results}
We present several numerical results to demonstrate the performance of the SP-WENO reconstruction. In each test case, a finite domain $x\in[a,b]$ is considered with either periodic or Neumann boundary conditions. An $N$-cell uniform mesh with mesh size $h = \frac{b-a}{N}$ is generated as
\[
%a = x_\frac{1}{2} < x_\frac{3}{2} < ... < x_{N - \frac{1}{2}} < x_{N + \frac{1}{2}} = b,
%\qquad \qquad a + \frac{h}{2} = x_1 < x_2< ... < x_{N-1} < x_N = b - \frac{h}{2}
x_\iph := a + ih, \qquad\qquad x_i := \frac{x_\imh+x_\iph}{2}.
\]
At times, ghost cells would be required to extend the mesh on either side. For instance, if both $v_\frac{1}{2}^-$ and $v_\frac{1}{2}^+$ are required, then two ghost cells need to be introduced on the left. The value of the solution in the ghost cells can be set depending on the boundary condition.

First, we demonstrate that our reconstruction SP-WENO does indeed give the desired order accuracy in approximating the solution at cell-interfaces, assuming the solution is smooth enough. 

\subsection{Reconstruction accuracy}
We consider the smooth function 
\[
u(x) = \sin{(10 \pi x)} + x, \quad x \in [0,1].
\]
to test the accuracy of SP-WENO. We also compare the results with ENO-2, ENO-3 and the existing robust version of WENO-3 proposed in \cite{JS96} (see also \cite{SHU98}). The error in the interface values are evaluated as
\[
 \| u_\iph^- - u(x_\iph) \|_{L^p_h} +  \| u_\iph^+ - u(x_\iph) \|_{L^p_h},  \quad p \in [1,\infty]
\]
where the discrete norm is defined as
\[
\|(.)_i\|_{L^p_h} =\left( \sum\limits_{i=1}^N | (.)_i |^p h \right)^\frac{1}{p} \quad \text{for } p < \infty, \qquad \|(.)_i\|_{L^\infty_h} =\max \limits_i | (.)_i |
\]
%\subsubsection{Inclined sine wave}

%the profile of which is shown in Figure \ref{fig:sol_profile}(a). 
The error in approximating the interface values, and the corresponding convergence rates are shown in Table \ref{tab:inclined_sine}. SP-WENO gives third order accuracy when we consider the $L_h^\infty$ norm. In fact, SP-WENO seems to give more than third order convergence in the $L_h^1$ norm, which is not the case for the ENO reconstructions. An improved superior convergence rate in the $L_h^1$ norm is also seen with WENO-3. Thus, one can expect the SP-WENO to give superior convergence results for evolution problems, in the $L^1$ sense. Looking at the finest mesh ($N=2560$), SP-WENO gives the best results from the point of view of error magnitude. Note that WENO-3 gives almost fourth-order convergence in the $L_h^\infty$ norm. This can possibly be explained by the fact that unlike the SP-WENO, the weights used in WENO-3 are smooth. 
%\begin{figure}
%\begin{center}
%\subfigure[Inclined sine wave]{\includegraphics[width=0.49\textwidth]{Figures/accuracy/accuracy_test1}}
%\subfigure[Superimposed sine waves]{\includegraphics[width=0.49\textwidth]{Figures/accuracy/accuracy_test2}}
%\caption{Solution profiles.}
%\label{fig:sol_profile}
%\end{center}
%\end{figure}

\begin{table}
\begin{center}
\begin{tabular}{|c|c|c|c|c||c|c|c|c|} \hline 
& \multicolumn{4}{|c||} {SP-WENO} & \multicolumn{4}{|c|} {ENO3} \\ \hline 
\multirow{2}{*}{N} & \multicolumn{2}{|c|} {$L^{1}_h$} & \multicolumn{2}{|c||} {$L^\infty_h$} & \multicolumn{2}{|c|} {$L^{1}_h$} & \multicolumn{2}{|c|} {$L^\infty_h$}\\ \cline{2-9} 
& error & rate& error & rate& error & rate& error & rate\\ \hline  
40& 8.59e-02 &  - & 2.24e-01 &  - & 3.95e-02 & - & 5.60e-02 &  -\\ \cline{1-5} \cline{6-9} 
80& 6.73e-03 &  3.67& 2.97e-02 &  2.92& 4.90e-03 &  3.01& 7.43e-03 &  2.92\\ \cline{1-5} \cline{6-9} 
160& 5.01e-04 &  3.75& 3.77e-03 &  2.98& 6.08e-04 &  3.01& 9.42e-04 &  2.98\\ \cline{1-5} \cline{6-9} 
320& 3.64e-05 &  3.78& 4.73e-04 &  2.99& 7.57e-05 &  3.01& 1.18e-04 &  2.99\\ \cline{1-5} \cline{6-9} 
640& 2.59e-06 &  3.81& 5.91e-05 &  3.00& 9.47e-06 &  3.00& 1.48e-05 &  3.00\\ \cline{1-5} \cline{6-9} 
1280& 1.82e-07 &  3.83& 7.39e-06 &  3.00& 1.18e-06 &  3.01& 1.85e-06 &  3.00\\ \cline{1-5} \cline{6-9} 
2560& 1.26e-08 &  3.85& 9.24e-07 &  3.00& 1.47e-07 &  3.00& 2.31e-07 &  3.00\\ \hline 
\end{tabular} 
\vspace{0.5mm}
\begin{tabular}{|c|c|c|c|c||c|c|c|c|} \hline 
& \multicolumn{4}{|c||} {WENO3} & \multicolumn{4}{|c|} {ENO2} \\ \hline 
\multirow{2}{*}{N} & \multicolumn{2}{|c|} {$L^{1}_h$} & \multicolumn{2}{|c||} {$L^\infty_h$} & \multicolumn{2}{|c|} {$L^{1}_h$} & \multicolumn{2}{|c|} {$L^\infty_h$}\\ 
\cline{2-9} 
& error & rate& error & rate& error & rate& error & rate\\ \hline 
40& 2.04e-01 & -& 4.34e-01 & -& 2.35e-01 & -& 4.34e-01 & -\\ \hline  
80& 4.03e-02 &  2.34& 1.14e-01 &  1.93& 5.39e-02 &  2.12& 1.14e-01 &  1.93\\ \hline 
160& 7.25e-03 &  2.48& 2.88e-02 &  1.98& 1.29e-02 &  2.07& 2.88e-02 &  1.98\\ \hline 
320& 1.18e-03 &  2.62& 7.10e-03 &  2.02& 3.14e-03 &  2.03& 7.22e-03 &  2.00\\ \hline 
640& 1.77e-04 &  2.74& 1.65e-03 &  2.10& 7.76e-04 &  2.02& 1.81e-03 &  2.00\\ \hline  
1280& 2.13e-05 &  3.05& 1.64e-04 &  3.34& 1.93e-04 &  2.01& 4.52e-04 &  2.00\\ \hline 
2560& 2.10e-06 &  3.34& 9.08e-06 &  4.17& 4.81e-05 &  2.00& 1.13e-04 &  2.00\\ \hline 
\end{tabular} 
\caption{Inclined sine wave: reconstruction errors.}
\label{tab:inclined_sine}
\end{center}
\end{table}

%\subsubsection{Super-imposed sine waves}
%We consider the superposition of three sine waves with different frequencies
%\[
%Z(x) = \sin{(2 \pi x)} + \sin{(4 \pi x)} + \sin{(8 \pi x)}, \quad x \in [0,1] 
%\]
%the profile of which is shown in Figure \ref{fig:sol_profile}(b). The errors and rate of convergence for this problem are shown in Table \ref{tab:super_sine}. The results are similar to those observed in the previous test case, with SP-WENO giving the best error rates in the $L_h^1$ norm.
%
%\begin{table}
%\begin{center}
%\input{sp_weno_eno3_actest2.tex}\\
%\vspace{0.5mm}
%\input{weno_eno2_actest2.tex}\\
%\caption{Inclined sine wave: reconstructions errors.}
%\label{tab:super_sine}
%\end{center}
%\end{table}

\subsection{Evolution problems}
We now test the performance of SP-WENO in solving one-dimensional scalar conservation laws. We use the TeCNO4 (finite difference) numerical flux
\[
f_\iph = f^*_\iph - \frac{1}{2} a_\iph \left (v^+_\iph - v^-_\iph \right)
\]
where $f^*_\iph$ is the fourth-order accurate entropy conservative flux given by \eqref{eqn:EC4}, while $v:=\eta'(u)=u$ is the entropy variable for the square entropy $\eta(u)=\frac{u^2}{2}$. The reconstruction is performed using SP-WENO, ENO2 or ENO3, all of which have the sign property. Time integration is performed using SSP-RK3 (see \cite{GOT01}).

\subsubsection{Linear Advection}
We first look at the linear advection equation
\[
u_t + c u_x = 0.
\]
The entropy conservative component of the flux is chosen using \eqref{eqn:ecflux_linadv} and \eqref{eqn:EC4}, while $a_\iph = |c|$. For the following test cases we take the convective velocity $c=1$.

\noindent
\textbf{Test 1:} The domain is $[ -\pi, \pi]$,  final time is $T = 0.5$ and CFL = 0.4, with the initial profile given by
\[
u_0(x) = \sin{(x)}
\]
and periodic boundary conditions. Table~\ref{table:linadv_test1} shows $L^1_h$ errors with various reconstructions. SP-WENO gives more than third-order accuracy, while ENO2 and ENO3 give expected convergence rates. 

\begin{table}
\begin{center}
\begin{tabular}{|c|c|c||c|c||c|c|} \hline 
& \multicolumn{2}{|c||} {SP-WENO} & \multicolumn{2}{|c||} {ENO3} & \multicolumn{2}{|c|} {ENO2} \\ \hline 
\multirow{2}{*}{N} & \multicolumn{2}{|c||} {$L^{1}_h$}  & \multicolumn{2}{|c||} {$L^{1}_h$} & \multicolumn{2}{|c|} {$L^{1}_h$}\\ \cline{2-7} 
& error & rate& error & rate& error & rate\\ \hline 
50& 6.22e-04& - & 2.58e-04 & - & 1.61e-02& -\\ \hline 
100& 6.90e-05& 3.17 & 3.23e-05& 3.00 & 4.36e-03& 1.88\\ \hline 
200& 7.66e-06& 3.17 & 4.04e-06& 3.00 & 1.16e-03& 1.91\\ \hline 
400& 8.29e-07& 3.21 & 5.05e-07& 3.00 & 3.08e-04& 1.91\\ \hline 
600& 2.26e-07& 3.20 & 1.50e-07& 3.00 & 1.41e-04& 1.92\\ \hline 
800& 8.72e-08& 3.31 & 6.31e-08& 3.00 & 8.09e-05& 1.93\\ \hline 
\end{tabular}
\caption{Linear advection smooth test 1 with TeCNO4 flux.}
\label{table:linadv_test1}
\end{center}
\end{table}

\noindent
\textbf{Test 2:} The domain is $[ -\pi, \pi]$,  final time is $T = 0.5$ and CFL = 0.5, with the initial profile given by
\[
u_0(x) = \sin^4{(x)}
\]
and periodic boundary conditions. The MUSCL scheme using ENO reconstruction is known to perform poorly for this test case; see Rogerson and Meiburg \cite{ROGER90}. This is also observed with the TeCNO4 flux using ENO, as shown in Table \ref{table:linadv_test2}. There is a clear deterioration in the convergence rate for ENO with mesh refinement. SP-WENO, on the other hand, does not suffer from such problems and continues to give more than third order accuracy.

\begin{table}
\begin{center}
\begin{tabular}{|c|c|c||c|c||c|c|} \hline 
& \multicolumn{2}{|c||} {SP-WENO} & \multicolumn{2}{|c||} {ENO3} & \multicolumn{2}{|c|} {ENO2} \\ \hline 
\multirow{2}{*}{N} & \multicolumn{2}{|c||} {$L^{1}_h$} & \multicolumn{2}{|c||} {$L^{1}_h$} & \multicolumn{2}{|c|} {$L^{1}_h$}\\ \cline{2-7} 
& error & rate& error & rate& error & rate\\ \hline 
100& 1.32e-03 & - & 1.48e-03 & - & 2.13e-02 & -\\ \hline 
200& 1.48e-04 &  3.16 & 1.97e-04 &  2.91 & 6.12e-03 &  1.80\\ \hline 
400& 1.64e-05 &  3.17 & 2.57e-05 &  2.94 & 1.66e-03 &  1.89\\ \hline 
600& 4.61e-06 &  3.14 & 8.35e-06 &  2.77 & 7.63e-04 &  1.91\\ \hline 
800& 1.79e-06 &  3.29 & 4.86e-06 &  1.88 & 4.41e-04 &  1.90\\ \hline 
1000& 8.55e-07 &  3.31 & 3.62e-06 &  1.32 & 2.87e-04 &  1.92\\ \hline  
\end{tabular}
\caption{Linear advection smooth test 2 with TeCNO4 flux.}
\label{table:linadv_test2}
\end{center}
\end{table}

\noindent
\textbf{Test 3:} The domain is $[ -1, 1]$,  final time is $T = 0.5$ and CFL = 0.4, with the initial profile being discontinuous:
\[
u_0(x) = \begin{cases} 
	3 &\text{if } x < 0 \\
	-1 &\text{if } x > 0.
\end{cases}
\]
The mesh consists of 100 cells with Neumann boundary conditions. The results with the TeCNO4 scheme are show in Figure~\ref{fig:linadv_test3}(a). While ENO-2 and ENO-3 reconstruction seem to give oscillation-free solutions, SP-WENO leads to minor undershoots and overshoots near the discontinuity. The solutions with mesh refinement for SP-WENO are shown in Figure \ref{fig:linadv_test3}(b).
\begin{figure}
\begin{center}
\subfigure[Comparison of different reconstruction methods]{\includegraphics[width=0.49\textwidth]{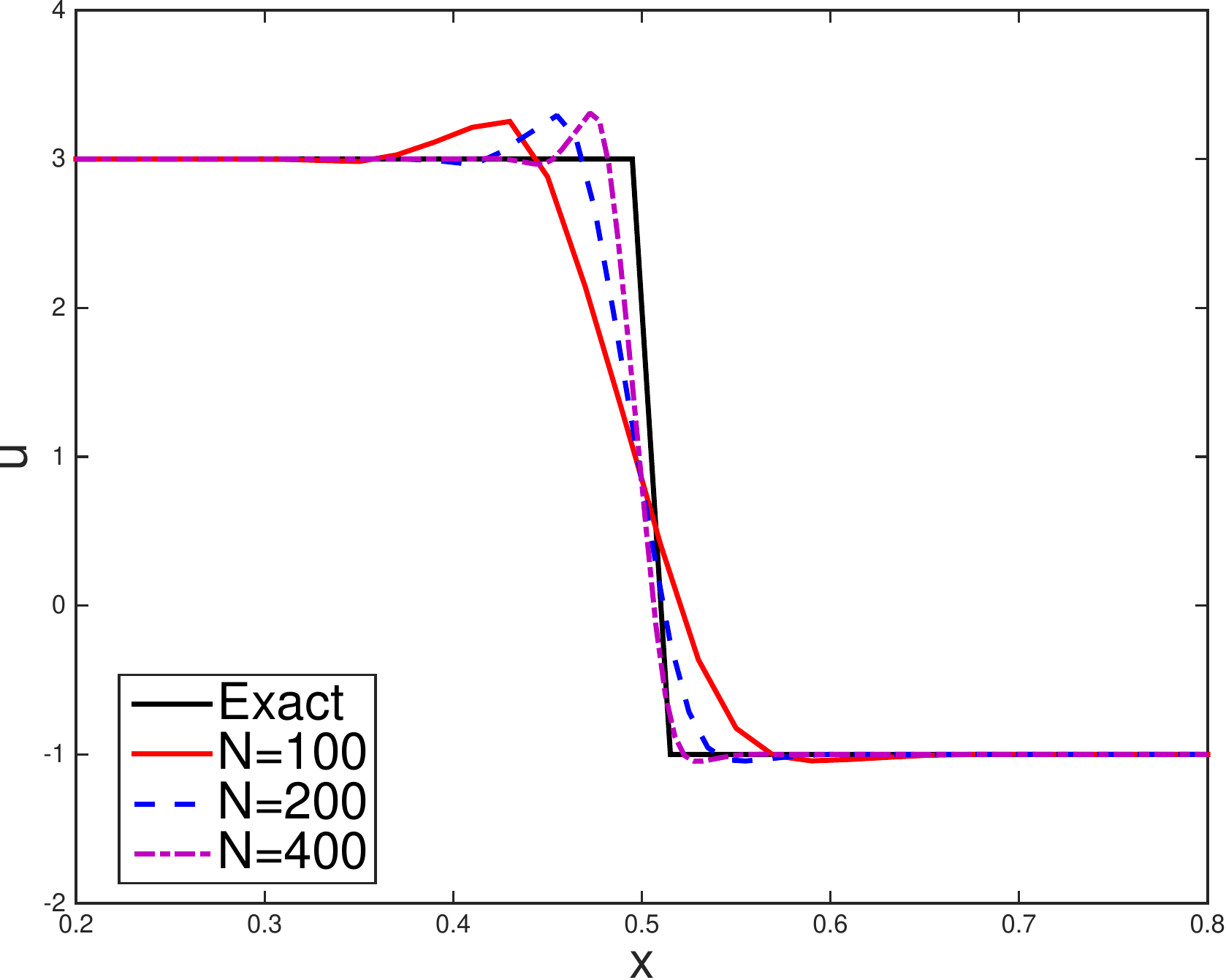}}
\subfigure[Mesh refinement study with SP-WENO]{\includegraphics[width=0.49\textwidth]{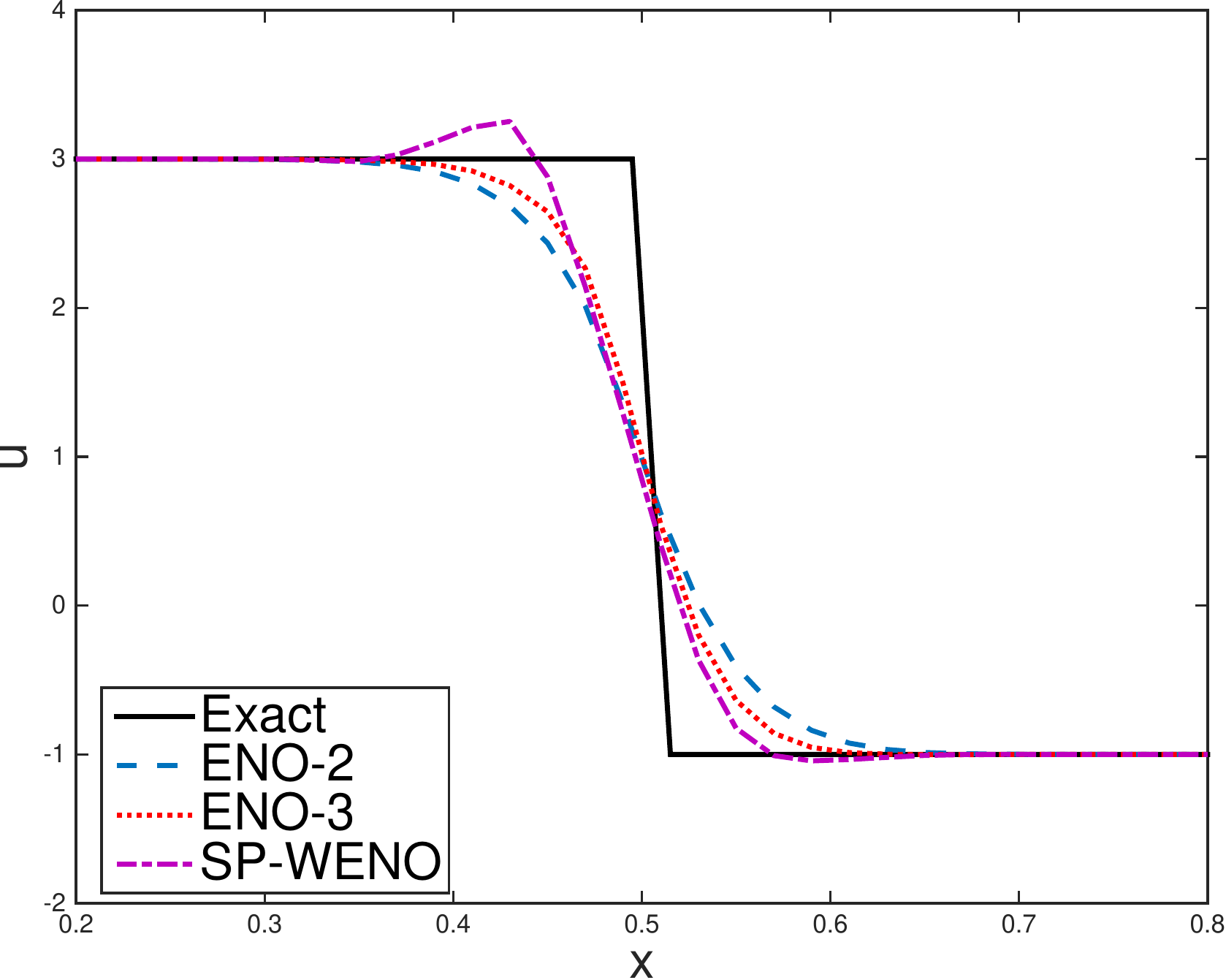}}
\caption{Linear advection test 3: Solution with TeCNO4 at time T=0.5 for $x\in[0.2,0.8]$.}
\label{fig:linadv_test3}
\end{center}
\end{figure}

\subsection{Burgers' Equation}
Next, we consider the Burgers equation
\[
u_t + \left(\frac{u^2}{2} \right)_x = 0
\]
with the entropy conservative component of the flux chosen as \eqref{eqn:ecflux_burgers}, \eqref{eqn:EC4}. We use the diffusion coefficient 
\[
a_\iph = \frac{|u_i| + |u_\ipo|}{2}.
\]

\noindent
\textbf{Test 1:} The domain is $[ -1, 1]$,  final time is $T = 0.3$ and CFL = 0.4, with the initial profile given by
\[
u_0(x) = 1+ \frac{1}{2} \sin{(\pi x)}
\]
and periodic boundary conditions. Table~\ref{table:burgers_test1}  clearly show a deterioration of TeCNO4 with ENO-3 reconstruction, while SP-WENO seems to once again give more than third order accuracy. 

Being a non-linear problem, the solution develops a discontinuity in finite time, which can be evaluated to be $t = \frac{2}{\pi} \approx 0.636$. The total entropy of the problem should be preserved over time as long as the solution is smooth. However, the time-stepping scheme introduces a small amount numerical diffusion. After the appearance of the discontinuity, a sharp decrease in total entropy is expected. To see this, we evaluate the relative change in total entropy 
\[
\frac{E(t) - E(0)}{E(0)}, \qquad E(t) := \sum \limits_i \eta_i(t) h \approx \int\limits_{-1}^{1} \eta(u(x,t)) \ud x
\]
up to time $T=0.7$. The results shown in Figure \ref{fig:rel_ent} clearly show that SP-WENO performs the best from the point of view of preservation of global entropy, prior to the shock. The most dissipative solutions are obtained with ENO-2, while ENO-3 lies somewhere in between. Moreover, the performance with all reconstructions improves with mesh refinement.

\begin{table}
\begin{center}
\begin{tabular}{|c|c|c||c|c||c|c|} \hline 
& \multicolumn{2}{|c||} {SP-WENO} & \multicolumn{2}{|c||} {ENO3} & \multicolumn{2}{|c|} {ENO2} \\ \hline 
\multirow{2}{*}{N}  & \multicolumn{2}{|c||} {$L^{1}_h$} & \multicolumn{2}{|c||} {$L^{1}_h$} & \multicolumn{2}{|c|} {$L^{1}_h$}\\ \cline{2-7} 
& error & rate& error & rate& error & rate\\ \hline 
50& 3.41e-04 & - & 3.07e-04 & - & 4.73e-03 & -\\ \hline 
100& 4.17e-05 &  3.03 & 4.76e-05 &  2.69 & 1.35e-03 &  1.81\\ \hline 
200& 4.51e-06 &  3.21 & 8.44e-06 &  2.49 & 3.77e-04 &  1.84\\ \hline 
400& 4.98e-07 &  3.18 & 1.80e-06 &  2.23 & 1.02e-04 &  1.89\\ \hline 
600& 1.33e-07 &  3.26 & 7.29e-07 &  2.23 & 4.71e-05 &  1.90\\ \hline 
800& 5.22e-08 &  3.25 & 3.91e-07 &  2.17 & 2.72e-05 &  1.92\\ \hline 
\end{tabular}
\caption{Burgers equation smooth test 1.}
\label{table:burgers_test1}
\end{center}
\end{table}

\begin{figure}
\begin{center}
\includegraphics[width=0.55\textwidth]{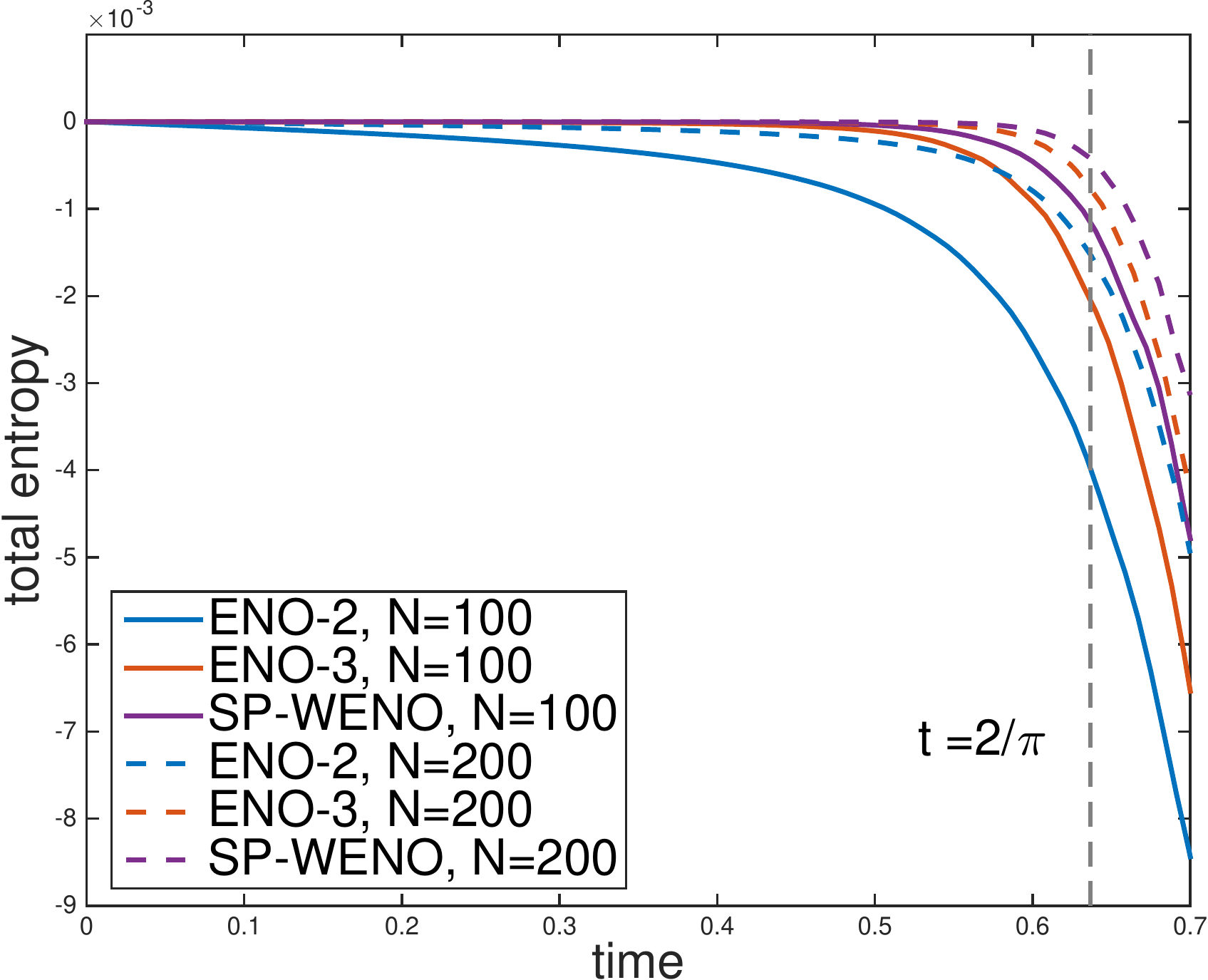}
\caption{Burgers equation smooth test 1: relative change in total entropy.}
\label{fig:rel_ent}
\end{center}
\end{figure}

\noindent
\textbf{Test 2:} The domain is $[ -1, 1]$,  final time is $T = 0.45$ and CFL = 0.4, with the initial profile given by
\[
u_0(x) = \begin{cases} 3, \quad \text {if } x < 0 \\
                                   -1, \quad \text {if } x > 0
              \end{cases}                     
\]
which corresponds to a left-moving shock. The mesh consists of 100 cells with Neumann boundary conditions. The solutions with the TeCNO4 flux is shown in Figure \ref{fig:burgers_test2}. As can be seen, ENO-2, ENO-3 and SP-WENO gives minor oscillations near the shock. The shock is equally well resolved by each method.

\begin{figure}
\begin{center}
\subfigure[Solution with TeCNO4 at time $t=0.45$]{\includegraphics[width=0.45\textwidth]{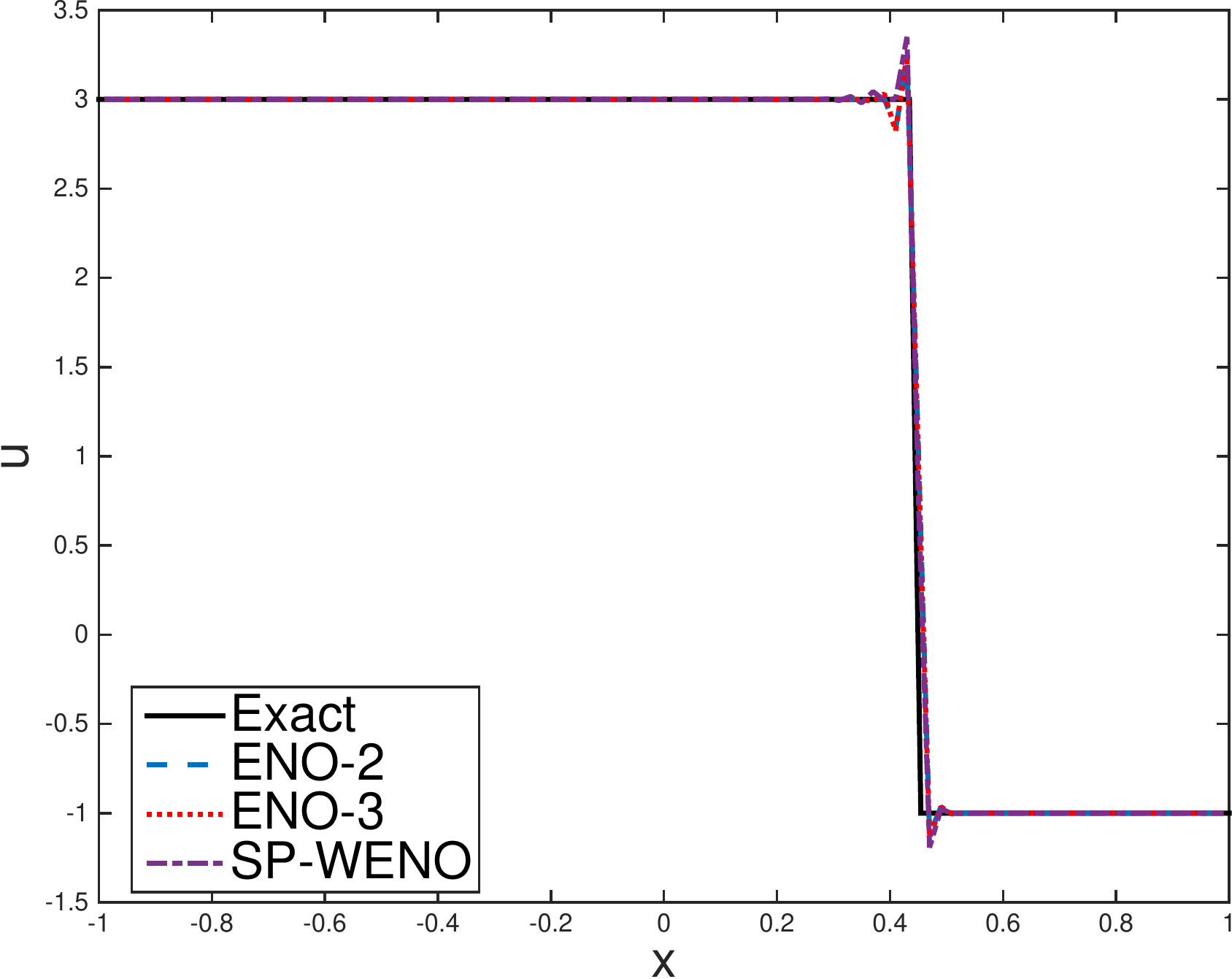}}
\subfigure[Zoomed pre-shock region]{\includegraphics[width=0.45\textwidth]{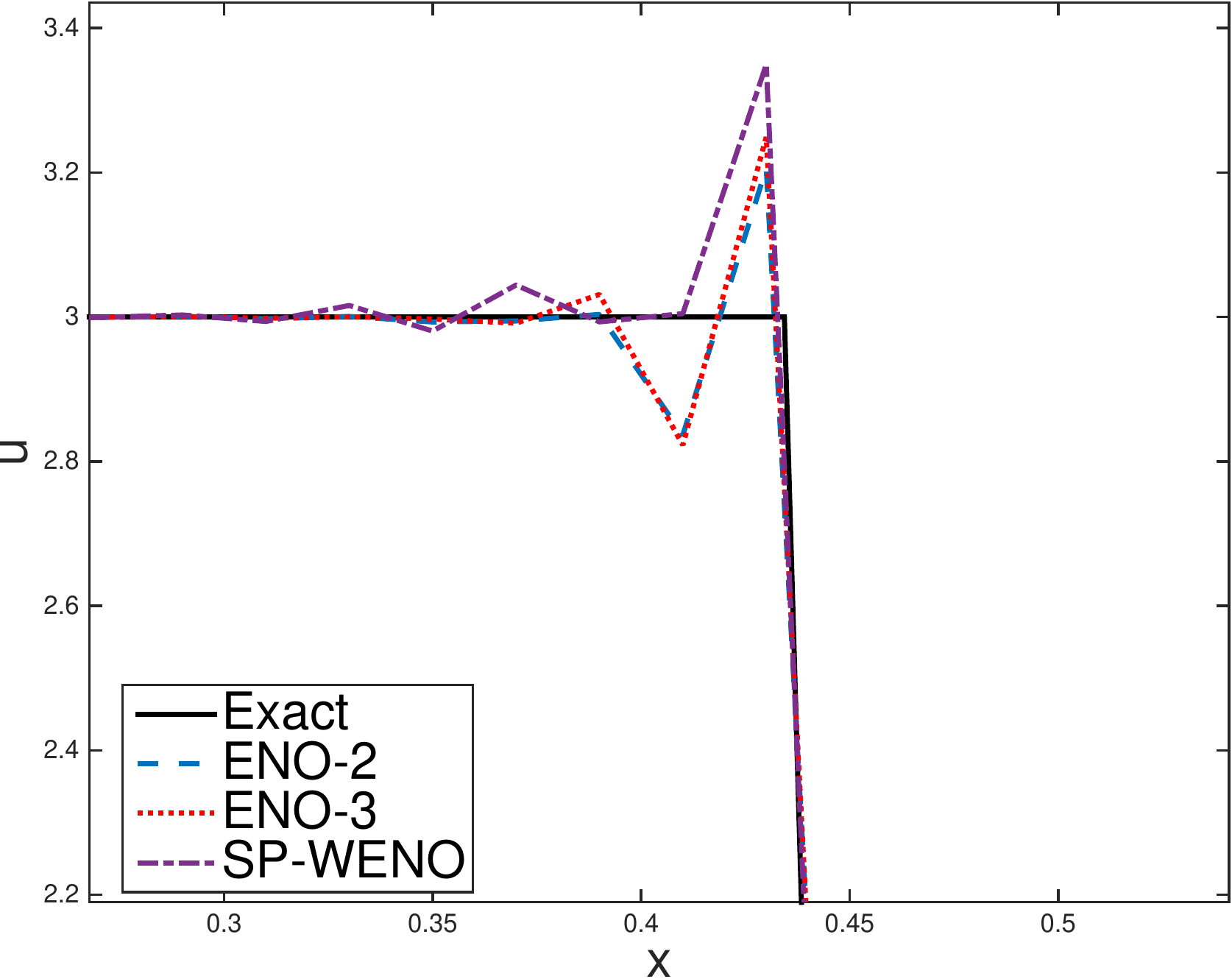}}
\caption{Burgers equation test 2.}
\label{fig:burgers_test2}
\end{center}
\end{figure}

\noindent
\textbf{Test 3:} The domain is $[ -1, 1]$,  final time is $T = 0.2$ and CFL = 0.4, with the initial profile given by
\[
u_0(x) = \begin{cases} -2, \quad \text {if } x < 0 \\
                                   1, \quad \text {if } x > 0
              \end{cases}                     
\]
which corresponds to a rarefaction wave. The mesh consists of 100 cells with Neumann boundary conditions. The solutions shown in Figure \ref{fig:burgers_test3} indicates that SP-WENO gives the sharpest solution, while ENO-2 is the most dissipative.

\begin{figure}
\begin{center}
\subfigure[]{\includegraphics[width=0.45\textwidth]{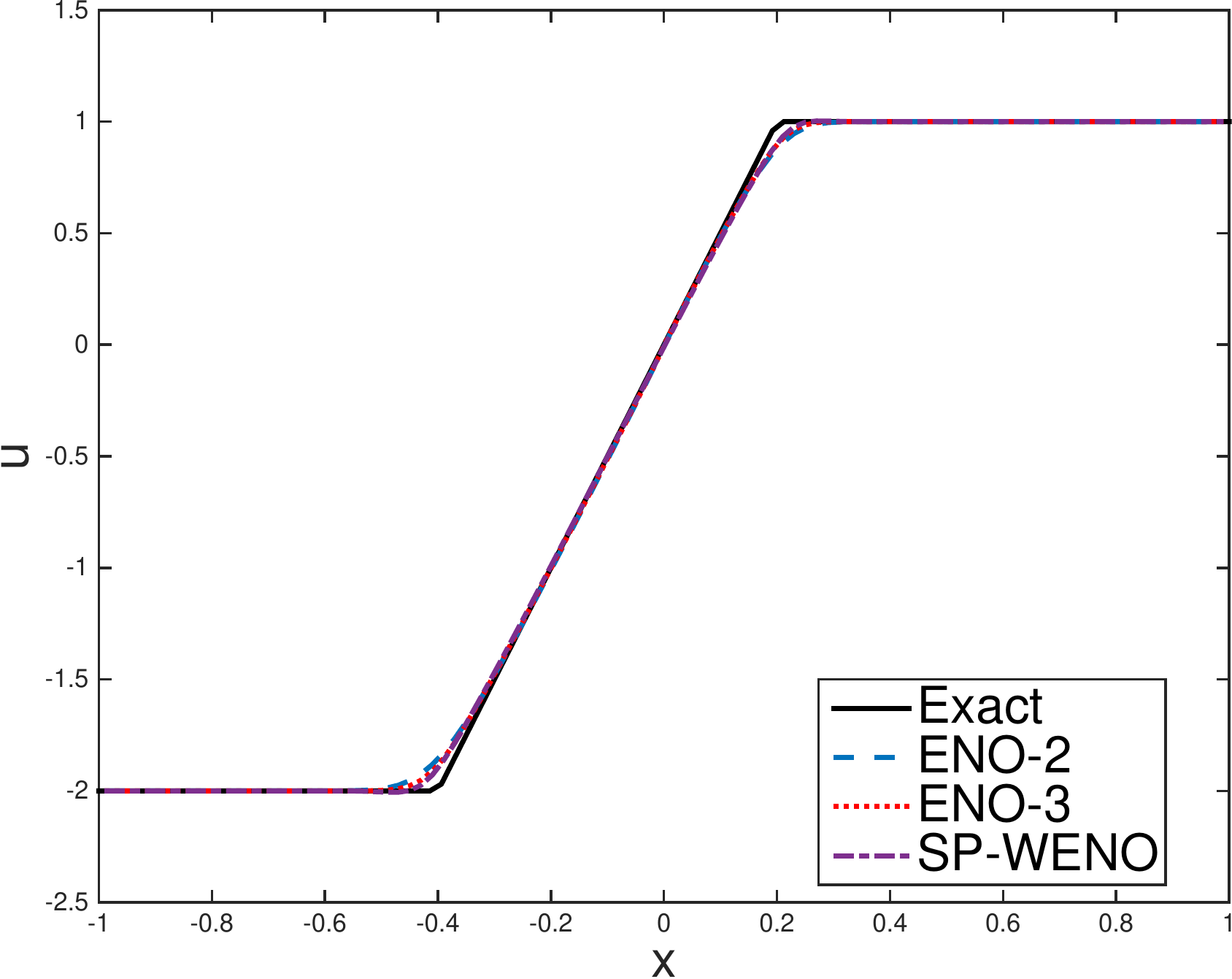}}
\subfigure[]{\includegraphics[width=0.45\textwidth]{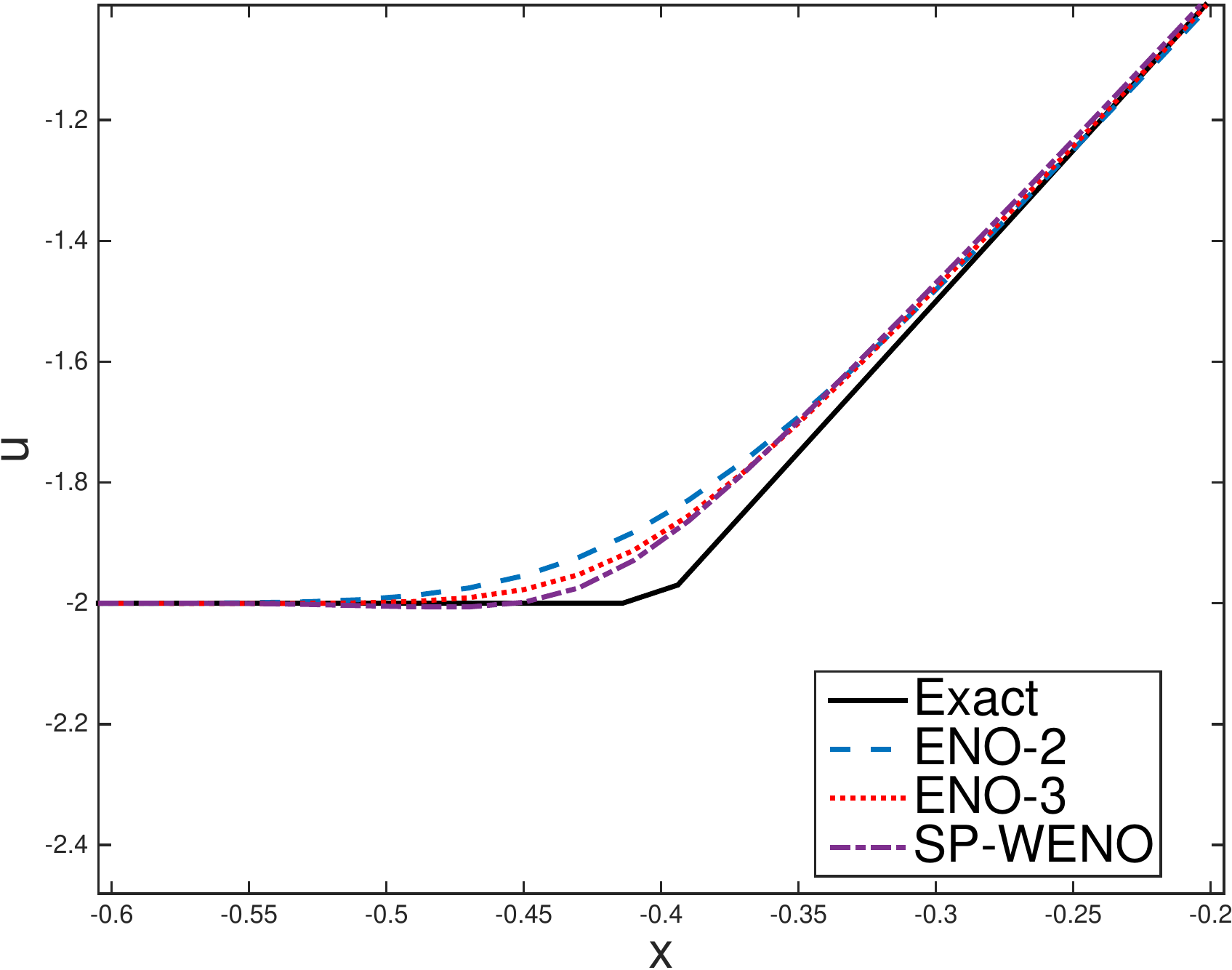}}
\caption{Burgers equation test 3: Solution with TeCNO4 at time $t=0.2$.}
\label{fig:burgers_test3}
\end{center}
\end{figure}

\begin{remark}
The minor oscillations visible in Figure \ref{fig:burgers_test3} can be attributed to insufficient dissipation near shocks. One possibility of improving the solution would be to modify the dissipation operator to obtain {\it entropy consistent} schemes, as described in \cite{ROE09}. However, this would lead to the introduction of an additional tuning parameter into the scheme, which we intend to avoid. Note that despite the presence of oscillations, the scheme satisfies the entropy condition and thus, by a Lax-Wendroff-type argument, is guaranteed to converge to the unique entropy solution whenever it converges.
\end{remark}

\section{Conclusion}\label{sec:conclusion}
The entropy condition ensures stability and uniqueness of weak solutions for hyperbolic conservation laws, and in order to converge to the entropy solution, numerical methods for hyperbolic conservation laws must be consistent with this entropy condition. It was found in \cite{FMT12} that reconstruction-based, high-order finite difference and finite volume methods are entropy stable provided the reconstruction method satisfies the \emph{sign property}. Specifically, a class of schemes called the TeCNO schemes were designed, relying crucially on the sign property to ensure that the numerical diffusion coefficient has the right (positive) sign. It was shown in \cite{FMT13} that the ENO reconstruction method satisfies this property.

It is well-known that the WENO reconstruction methods exhibit several advantages over ENO; see e.g.\ \cite{SHU98}. In the present paper we design a third-order WENO reconstruction method which satisfies the sign property. We define the \emph{feasible region} as the set of all stencil weights satisfying properties \eqref{eqn:cons_a}--\eqref{eqn:signprop}, and then choose the point in this region such that the order contraints given by \eqref{eqn:variableorder} are satisfied (for smooth solutions). Furthermore, the weights are constructed to satisfy several symmetry and consistency conditions, (properties \eqref{eqn:negsym}--\eqref{eqn:mirsym}).  In a series of numerical experiments we demonstrate that the TeCNO scheme, using the proposed WENO reconstruction, is both third (or higher) order accurate and entropy stable, both for linear and nonlinear problems.

The SP-WENO reconstruction can be used to construct higher order entropy stable schemes for systems of conservation laws in one-dimension, in which case {\it scaled entropy variables} must be reconstructed \cite{FMT12}. Furthermore, being a finite difference scheme, the scheme can be easily extended to higher-dimensional problems on Cartesian meshes, by treating it in a dimension by dimension fashion, as described e.g.\ in \cite[Section 6]{FMT12}. The possibility of extending the proposed ideas to construct sign-preserving WENO schemes of fifth-order or higher are being currently investigated, and will be the topic of future papers.

\end{document}